\documentclass[11pt]{amsart}

\usepackage[T1]{fontenc}
\usepackage{lmodern}
\usepackage{amsmath,amssymb,amsthm,mathtools}
\usepackage[a4paper,margin=32mm]{geometry}
\usepackage[hidelinks]{hyperref}

\newtheorem{proposition}{Proposition}[section]
\newtheorem{theorem}[proposition]{Theorem}
\newtheorem{lemma}[proposition]{Lemma}

\newcommand{\Sym}{\mathrm{Sym}}
\newcommand{\scalar}[2]{\left\langle #1,#2\right\rangle_{q,t}}
\newcommand{\Fcal}{\mathcal F}
\newcommand{\Dop}{\mathsf D_t}

\newcommand{\Pcal}{\mathcal P}

\title[Shifted Macdonald Polynomials and a Deformed Product]
{Shifted Macdonald Polynomials and the\\
$(q,t)$-Deformed Goulden--Jackson Product}
\author{Jean-Yves Thibon}

\address{Laboratoire d’Informatique Gaspard-Monge,
Université Gustave Eiffel,
CNRS, ESIEE Paris,
F-77454 Marne-la-Vallée, France}
\email[Jean-Yves Thibon]{jean-yves.thibon@univ-eiffel.fr} 

\subjclass[2020]{Primary 05E05; Secondary 05E10, 33D52}
\keywords{Symmetric functions,  shifted symmetric
functions, Macdonald polynomials, Goulden--Jackson product}

\date{}

\begin{document}
\begin{abstract}
In a preceding article, we introduced stable symmetric series encoding
simultaneously the normalized conjugacy classes of all symmetric
groups.  The same rational series remain stable for the Jack-deformed
Goulden--Jackson product.  We investigate their two-parameter
Macdonald analogue.  Starting from the $(q,t)$-deformed class product 
(dual to the coproduct diagonal on the $J$-basis),
we construct the unique infinite series
whose multiplication realizes any shifted Macdonald eigenvalue.  In
contrast with the classical and Jack cases, this transform is no
longer multiplication by a fixed explicit series.  We identify it
with a composition of a Cauchy multiplication, the integral nabla
operator, and a simple diagonal operator.  We then determine the
series realizing the Nazarov--Sklyanin operators $A^{(k)}$, derive a
single generating series for all column partitions, and compare our construction
 with the Macdonald characters and Theta operators of Ben Dali and
D'Adderio.
\end{abstract}
\maketitle

\section{Introduction}

This paper is a direct continuation of
\cite{ThibonStable}.  For a homogeneous symmetric function $f$ of
degree $m$, that article introduced the completed symmetric series
\begin{equation}\label{eq:intro-classical-hat}
 \widehat f=\frac{m!f}{(1-p_1)^{m+1}}.
\end{equation}
When $f=p_\mu$, its degree-$n$ component is
\begin{equation}\label{eq:intro-class-component}
 \widehat p_\mu^{[n]}
 =(n)_{m}p_\mu p_1^{n-m}, \quad (n)_m :=n(n-1)\cdots (n-m+1),
 \qquad n\geq m=|\mu|,
\end{equation}
and is the cycle index of a normalized stable conjugacy
class.  Multiplication of these series for the class product 
recovers the Ivanov--Kerov algebra.  For
example,
\begin{equation}\label{eq:intro-classical-square}
 \widehat p_2\times\widehat p_2
 =\widehat p_{22}+4\widehat p_3+2\widehat p_{11}.
\end{equation}

A striking observation of \cite{ThibonStable} is that exactly the
same rational series \eqref{eq:intro-classical-hat}, without changing
their coefficients, are stable for the Jack-deformed
Goulden--Jackson product $\times_\alpha$.  Only their multiplication
law changes; for instance,
\begin{equation}\label{eq:intro-Jack-square}
 \widehat p_2\times_\alpha\widehat p_2
 =\widehat p_{22}+4\widehat p_3
  +2\alpha\widehat p_{11}+2(\alpha-1)\widehat p_2.
\end{equation}
Shifted symmetric functions $f^*$  arise then naturally in both cases  as eigenvalues of $\times$-multiplication by the series $\hat f$: $\widehat s_\mu\times s_\lambda=s_\mu^*(\lambda)s_\lambda$, and
$\widehat p_\mu \times_\alpha P_\lambda^{(\alpha)}={\rm Ch}_\mu^{(\alpha)}(\lambda) P_\lambda^{[\alpha)}$.

This coincidence makes it natural to ask what becomes of the stable
series under the full $(q,t)$-deformation.

The first answer is that the simple rational expression
\eqref{eq:intro-classical-hat} does not survive.  The shifted
Macdonald representative of $f$ is still uniquely determined by its
eigenvalues, but the map $f\mapsto\widehat f$ is no longer
multiplication of $f$ by an explicit universal series.  
%
%Let us make the normalization implicit in this statement explicit.
For $\lambda\vdash n$, we seek a completed series whose homogeneous
component of degree $n$ satisfies
\[
 \widehat P_\mu^{[n]}\times_{q,t}P_\lambda
 =P_\mu^*(q^\lambda;q,t)P_\lambda,
\]
where $P_\mu^*$ has Okounkov's normalization.  
%The factor $n!$ in the definition of $\widehat P_\mu^{[n]}$ is already forced by the
%factor $1/n!$ in the diagonal coproduct, so no factorial occurs in this eigenvalue formula. 

Okounkov's diagonal normalization is useful
for comparison with the interpolation literature, but it is not
adapted to the lowest homogeneous component: if $m=|\mu|$, then
\[
 \widehat P_\mu^{[m]}
 =m!(-1)^m q^{n(\mu')}t^{-2n(\mu)}P_\mu.
\]
We shall therefore also introduce a renormalized transform
$f\mapsto\check f$ for which $\check f^{[m]}=m!f$.  In exponential
normalization it is simply
\[
 \mathbf{\check f}
 =\mathsf D_t\,\mathcal P_{1/(1-q)}\mathsf D_t^{-1}f,
\]
where $\mathcal P_Z$ denotes multiplication by $\sigma_1[ZX]$, and
$\mathsf D_tP_\lambda=t^{-n(\lambda)}P_\lambda$.  
Thus the hat notation retains Okounkov's eigenvalues, whereas the
check notation makes the transform unitriangular with respect to
degree.  After the factorial normalization is removed, we shall prove
that
\begin{equation}\label{eq:intro-Macdonald-transform}
 \mathbf{\widehat f}
 =\mathsf D_t\,\mathcal P_{1/(1-q)}\boldsymbol\nabla f,
\end{equation}
where $\boldsymbol\nabla$ is the integral-form nabla operator.

Formula
\eqref{eq:intro-Macdonald-transform} is already more complicated than
\eqref{eq:intro-classical-hat}: the Cauchy multiplication is followed
by a nonmultiplicative diagonal operator.

Our route to \eqref{eq:intro-Macdonald-transform} begins with the
$\times_{q,t}$-product, dual to the diagonal coproduct
\[
 \Gamma_{q,t}^{(n)}J_\lambda
 =\frac1{n!}J_\lambda\otimes J_\lambda.
\]
We first reconstruct spectrally the series representing Okounkov's
shifted Macdonald polynomials.  We then determine the series representing the
differential  operators of Nazarov--Sklyanin \cite{NazarovSklyanin}.
A Hall--Littlewood Cauchy calculation gives both the
individual series representing $A^{(k)}$ and a generating series for
all $k$.  Okounkov's binomial formula identifies their eigenvalues
with shifted columns at inverted parameters.

Finally, we compare this construction with Ben Dali and D'Adderio's
Macdonald characters \cite{BenDaliDAdderio}.  
Their creation operator $\Gamma(u,v)$ is a
quotient of two Theta series, and their isomorphism from ordinary to
shifted symmetric functions implies exactly
\eqref{eq:intro-Macdonald-transform}.  Thus the two approaches lead
to the same shifted functions: their construction emphasizes creation
operators and positivity, whereas the diagonal coproduct selects the
unique completed series representing these functions as multiplication
eigenvalues.  In particular, their Macdonald character
$\widetilde\theta_\mu^{(q,t)}=p_\mu^*$ is represented here by
$\widehat p_\mu$.

\section{The stable series}
\subsection{The diagonal coproduct}
Let $P_\lambda(X;q,t)$ and $J_\lambda(X;q,t)$ denote respectively the
monic and integral Macdonald functions.  We write
\[
 J_\lambda=c_\lambda(q,t)P_\lambda,
 \qquad
 \scalar{J_\lambda}{J_\lambda}
 =j_\lambda(q,t)=c_\lambda(q,t)c'_\lambda(q,t).
\]
In homogeneous degree $n$, consider the  diagonal coproduct
\begin{equation}\label{eq:diagonal-coproduct}
 \Gamma_{q,t}^{(n)}J_\lambda
 =\frac1{n!}J_\lambda\otimes J_\lambda,
 \qquad \lambda\vdash n.
\end{equation}
Let $\times_{q,t}$ be its dual product for the Macdonald scalar
product.  Orthogonality gives
\begin{equation}\label{eq:diagonal-product}
 J_\lambda\times_{q,t}J_\mu
 =\delta_{\lambda\mu}\frac{j_\lambda(q,t)}{n!}J_\lambda.
\end{equation}

Since $Q_\lambda=J_\lambda/c'_\lambda$, the same identity in the
Macdonald $Q$-basis reads
\begin{equation}\label{eq:diagonal-product-Q}
 Q_\lambda\times_{q,t}Q_\nu
 =\delta_{\lambda\nu}\frac{c_\lambda(q,t)}{n!}Q_\lambda.
\end{equation}

\subsection{The hat transform}

For Schur or Jack functions, knowledge of the explicit form of the stable series was sufficient to reconstruct from scratch the theory of shifted functions. 
In the Macdonald case, low degree computations quickly show
 that this expression is no longer valid, so we have to resort to the existing 
theory of shifted Macdonald functions. 

We use Okounkov's normalization of the interpolation Macdonald
polynomials and abbreviate
\begin{equation}\label{eq:shifted-evaluation-notation}
 P_\mu^*(\lambda)
 :=P_\mu^*(q^\lambda;q,t).
\end{equation}
Thus $P_\mu^*(\lambda)=0$ unless $\mu\subseteq\lambda$.  For every
$n\geq0$, define
\begin{equation}\label{eq:hat-P-component}
  {
 \widehat P_\mu^{[n]}(X)
 :=n!\sum_{\lambda\vdash n}
 \frac{P_\mu^*(\lambda)}{c_\lambda(q,t)}
 Q_\lambda(X;q,t). }
\end{equation}
In particular, $\widehat P_\mu^{[n]}=0$ for $n<|\mu|$.  We regard
\[
 \widehat P_\mu=\sum_{n\geq |\mu|}\widehat P_\mu^{[n]}
\]
as an element of the degree completion of $\Sym$.  Extending
$P_\mu\mapsto\widehat P_\mu$ linearly defines the \emph{hat
transform}
\[
 f\longmapsto\widehat f.
\]
Equivalently, if $f=\sum_\mu f_\mu P_\mu$, its shifted version is
$f^*=\sum_\mu f_\mu P_\mu^*$, and $\widehat f$ is characterized by
the following spectral property.

\begin{proposition}\label{prop:hat-characterization}
For every $\lambda\vdash n$,
\begin{equation}\label{eq:hat-characterization}
  {
 \widehat P_\mu^{[n]}\times_{q,t}Q_\lambda
 =P_\mu^*(\lambda)Q_\lambda. }
\end{equation}
More generally,
\[
 \widehat f^{[n]}\times_{q,t}Q_\lambda
 =f^*(\lambda)Q_\lambda.
\]
The map $f\mapsto\widehat f$ is the unique linear map with this
property.
\end{proposition}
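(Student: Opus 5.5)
The plan is to compute directly in the $Q$-basis using \eqref{eq:diagonal-product-Q}. Fix $\lambda\vdash n$. By definition \eqref{eq:hat-P-component},
\[
 \widehat P_\mu^{[n]}\times_{q,t}Q_\lambda
 =n!\sum_{\nu\vdash n}\frac{P_\mu^*(\nu)}{c_\nu(q,t)}\,Q_\nu\times_{q,t}Q_\lambda .
\]
Applying \eqref{eq:diagonal-product-Q}, only the term $\nu=\lambda$ survives, and it contributes $n!\,\frac{P_\mu^*(\lambda)}{c_\lambda}\cdot\frac{c_\lambda}{n!}Q_\lambda=P_\mu^*(\lambda)Q_\lambda$. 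This gives \eqref{eq:hat-characterization}. The factors $n!$ and $c_\lambda$ in the definition were chosen precisely so that they cancel here.

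For general $f=\sum_\mu f_\mu P_\mu$, I will use linearity of the hat transform, of the degree-$n$ projection, and of $\times_{q,t}$ in its first argument. This gives $\widehat f^{[n]}\times_{q,t}Q_\lambda=\sum_\mu f_\mu P_\mu^*(\lambda)Q_\lambda=f^*(\lambda)Q_\lambda$, since $f^*=\sum_\mu f_\mu P_\mu^*$ by definition. The sums are finite because $f$ is a finite combination of the $P_\mu$.

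For uniqueness, suppose $L$ is a linear map, with values in the degree completion, such that $L(f)^{[n]}\times_{q,t}Q_\lambda=f^*(\lambda)Q_\lambda$ for all $n$ and all $\lambda\vdash n$. Write $L(f)^{[n]}=\sum_{\nu\vdash n}a_\nu Q_\nu$, which is possible because the $Q_\nu$ form a basis of the degree-$n$ component. Then \eqref{eq:diagonal-product-Q} gives $L(f)^{[n]}\times_{q,t}Q_\lambda=a_\lambda\frac{c_\lambda}{n!}Q_\lambda$. Since $c_\lambda(q,t)\neq0$ as a rational function, this forces $a_\lambda=n!\,f^*(\lambda)/c_\lambda$. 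Hence $L(f)^{[n]}=\widehat f^{[n]}$ for every $n$, and so $L(f)=\widehat f$.

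Equivalently, $\times_{q,t}$ restricted to degree $n$ is a semisimple commutative algebra with orthogonal idempotents proportional to the $Q_\lambda$. An element is therefore determined by its eigenvalues on these idempotents.

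There is no real obstacle here. The only point needing care is the bookkeeping of normalizations. We must use the $Q$-form \eqref{eq:diagonal-product-Q} rather than the $J$-form, and check that it follows from \eqref{eq:diagonal-product} via $Q_\lambda=J_\lambda/c'_\lambda$ and $j_\lambda=c_\lambda c'_\lambda$. We must also note that the components in degrees $n<|\mu|$ vanish, consistently with $P_\mu^*(\lambda)=0$ when $\mu\not\subseteq\lambda$.
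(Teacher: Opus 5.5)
Your proposal is correct and follows the same route as the paper: substitute the definition of $\widehat P_\mu^{[n]}$, apply the $Q$-basis form of the diagonal product so that only the $\nu=\lambda$ term survives with $n!/c_\lambda$ and $c_\lambda/n!$ cancelling, and get uniqueness from the fact that the $Q_\lambda$ form a basis in each degree. Your extra bookkeeping on linearity, the derivation of the $Q$-form from the $J$-form, and the explicit uniqueness computation simply spell out what the paper leaves implicit.
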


\begin{proof}
Substitute \eqref{eq:hat-P-component} into the left-hand side and use
\eqref{eq:diagonal-product-Q}.  All terms vanish except the one
indexed by $\lambda$, and its two scalar factors cancel:
\[
 n!\frac{P_\mu^*(\lambda)}{c_\lambda}
 \frac{c_\lambda}{n!}Q_\lambda
 =P_\mu^*(\lambda)Q_\lambda.
\]
Since the $Q_\lambda$ form a basis in every degree, the same
calculation also proves uniqueness.
\end{proof}

\subsection{Exponential normalization}
It is convenient to remove the factorials and introduce the
exponentially normalized completed image
\begin{equation}\label{eq:hat-exponential}
 \mathbf{\widehat P}_\mu
 :=\sum_{n\geq|\mu|}\frac{\widehat P_\mu^{[n]}}{n!}.
\end{equation}
Using $Q_\lambda/c_\lambda=P_\lambda/c'_\lambda$, we have
\begin{equation}\label{eq:hat-exponential-spectral}
 \mathbf{\widehat P}_\mu
 =\sum_{\lambda\supseteq\mu}
 \frac{P_\mu^*(\lambda)}{c'_\lambda(q,t)}P_\lambda(X;q,t).
\end{equation}

\subsection{Summation of the series $\mathbf{\widehat P}_\mu$}
Recall Okounkov's $(q,t)$-binomial coefficients
\begin{equation}\label{eq:Okounkov-binomial}
 \left[\begin{matrix}\lambda\\ \mu\end{matrix}\right]_{q,t} 
 :=\frac{P_\mu^*(q^\lambda;q,t)}
          {P_\mu^*(q^\mu;q,t)}.
\end{equation}

The principal-specialization form of Okounkov's binomial theorem is
the following identity.
\begin{lemma}\label{lem:skew-binomial}
For $\mu\subseteq\lambda$,
\begin{equation}\label{eq:skew-binomial}
 t^{n(\mu)-n(\lambda)}
 Q_{\lambda/\mu}\left[\frac1{1-t};q,t\right]
 =\frac{c'_\mu(q,t)}{c'_\lambda(q,t)}
 \left[\begin{matrix}\lambda\\ \mu\end{matrix}\right]_{q,t}.
\end{equation}
Both sides vanish when $\mu\not\subseteq\lambda$.
\end{lemma}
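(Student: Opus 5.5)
We derive \eqref{eq:skew-binomial} from Okounkov's binomial theorem. In one standard form, for $\lambda$ with at most $\ell$ parts,
\[
 \frac{P_\lambda(x_1+1,\dots,x_\ell+1;q,t)}{P_\lambda(1,t,\dots,t^{\ell-1};q,t)}
 =\sum_{\mu\subseteq\lambda}
 \left[\begin{matrix}\lambda\\ \mu\end{matrix}\right]_{q,t}
 \frac{P_\mu^*(x\,t^{\delta};1/q,1/t)}{P_\mu^*(q^{\mu};1/q,1/t)}
 \quad\text{(suitably normalized)},
\]
and its stable version is obtained by letting $\ell\to\infty$. The strategy is to compare this with the skew Cauchy/branching expansion $P_\lambda[X+Y]=\sum_\mu P_\mu[X]\,P_{\lambda/\mu}[Y]$. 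We specialize the second alphabet principally, $Y=1/(1-t)=1+t+t^2+\cdots$. Because of the plethystic conventions, the natural choice is the $Q$-skew form
\[
 Q_\lambda[X+Y]=\sum_\mu Q_\mu[X]\,Q_{\lambda/\mu}[Y].
\]
The coefficient extracted against $P_\mu[X]$ (equivalently, by top-degree comparison in the binomial formula) then gives the principal specialization of the skew function.

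Concretely, we proceed in four steps.

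\emph{Step 1.} Write the binomial formula in the stable form
\[
 \frac{P_\lambda[X+\frac1{1-t}]}{P_\lambda[\frac1{1-t}]}
 =\sum_\mu \left[\begin{matrix}\lambda\\ \mu\end{matrix}\right]_{q,t}
 \frac{P_\mu[X]+\text{lower}}{P_\mu[\frac1{1-t}]}.
\]
The top homogeneous component of each shifted polynomial $P_\mu^*$ is $P_\mu$, so after the substitution $x_i\mapsto$ (shifted variables) the lower-degree corrections are absorbed. The key point is that the $\mu$-coefficient of the degree-$|\mu|$ part in $X$ equals the binomial coefficient divided by $P_\mu[1/(1-t)]$.

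\emph{Step 2.} Compare with the skew expansion, which gives
\[
 P_\lambda[X+\tfrac1{1-t}]=\sum_\mu P_\mu[X]\,P_{\lambda/\mu}[\tfrac1{1-t}].
\]
This yields
\[
 P_{\lambda/\mu}[\tfrac1{1-t}]
 =\left[\begin{matrix}\lambda\\ \mu\end{matrix}\right]_{q,t}
 \frac{P_\lambda[1/(1-t)]}{P_\mu[1/(1-t)]}.
\]

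\emph{Step 3.} Convert to $Q$ using $Q_{\lambda/\mu}=(b_\lambda/b_\mu)P_{\lambda/\mu}$, with $b_\lambda=c_\lambda/c'_\lambda$. Then insert the principal specialization
\[
 P_\lambda[\tfrac1{1-t}]=\frac{t^{n(\lambda)}}{c_\lambda(q,t)}.
\]
The factors assemble into
\[
 \frac{c_\lambda}{c'_\lambda}\cdot\frac{c'_\mu}{c_\mu}\cdot\frac{t^{n(\lambda)}c_\mu}{t^{n(\mu)}c_\lambda}
 =t^{n(\lambda)-n(\mu)}\frac{c'_\mu}{c'_\lambda},
\]
which is exactly the claimed identity \eqref{eq:skew-binomial}.

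\emph{Step 4 (vanishing).} If $\mu\not\subseteq\lambda$, then $Q_{\lambda/\mu}=0$ by the Littlewood--Richardson support of skew Macdonald functions, and the binomial coefficient vanishes because $P_\mu^*(\lambda)=0$ by the interpolation property.

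The main obstacle is Step 1. Okounkov's theorem is naturally stated in finitely many variables with evaluation at $q^{\lambda}t^{-\delta}$ (or with inverted parameters $1/q,1/t$), and its shift is $x_i\mapsto x_i+1$ rather than a plethystic $X+1/(1-t)$. One must check carefully that the finite-variable formula stabilizes, and that the shift corresponds to adding the principal alphabet. One must also check that the parameter conventions produce $t^{n(\mu)-n(\lambda)}$ and not, say, $q^{n(\lambda')}$ factors. To handle this, I would verify the normalization on the one-row case $\lambda=(n)$, $\mu=(k)$, where everything reduces to $q$-binomial identities, and fix any sign or power conventions there.
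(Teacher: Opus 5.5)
\textbf{There is a genuine gap: Step 1 does not establish what Steps 2--4 need, and making it do so would assume the lemma.}

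Given Step 1, your Steps 2--4 are correct. The branching rule $P_\lambda[X+Y]=\sum_\mu P_\mu[X]\,P_{\lambda/\mu}[Y]$, the relation $Q_{\lambda/\mu}=(b_\lambda/b_\mu)P_{\lambda/\mu}$ with $b_\lambda=c_\lambda/c'_\lambda$, and $P_\lambda[\frac1{1-t}]=t^{n(\lambda)}/c_\lambda$ do assemble into the stated identity, and the vanishing argument is fine. But these steps are reversible. What they actually show is that the lemma is \emph{equivalent} to the homogeneous expansion
\[
 \frac{P_\lambda[X+\frac1{1-t}]}{P_\lambda[\frac1{1-t}]}
 =\sum_{\mu\subseteq\lambda}
 \frac{P^*_\mu(q^\lambda)}{P^*_\mu(q^\mu)}\,
 \frac{P_\mu[X]}{P_\mu[\frac1{1-t}]} .
\]
So everything rests on Step 1, and Step 1 does not prove this. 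Your ``standard form'' conflates two different operations. Replacing each variable $x_i$ by $x_i+1$ is not the plethystic addition of the alphabet $\frac1{1-t}$, which adjoins new variables $1,t,t^2,\dots$. Plethystic addition is additive on each $p_r$; the pointwise shift is not, already for $p_2$. The branching rule concerns only the plethystic operation. Okounkov's theorem contains neither operation in the form you use: it expands $P_\lambda$ at unshifted (suitably $t$-twisted) variables in the inhomogeneous interpolation polynomials, with the shift built into those polynomials. The shape with $x_i+1$ on the left belongs to the Jack binomial formula, whose right-hand side involves homogeneous $P_\mu$.

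The coefficient extraction also fails, even if one grants an expansion $\sum_\kappa\beta_\kappa(P_\kappa+\text{lower})$. The coefficient of $P_\mu[X]$ in degree $|\mu|$ receives contributions from the lower-degree parts of every term with $\mu\subsetneq\kappa\subseteq\lambda$, and comparing top components only identifies the trivial coefficient $\mu=\lambda$. In fact the matrix $\bigl(P^*_\mu(q^\lambda)/P^*_\mu(q^\mu)\bigr)_{\lambda,\mu}$ is unitriangular for inclusion. Hence an expansion of your Step-1 shape with nonzero corrections would contradict the (true) homogeneous expansion above, and asserting that the corrections are ``absorbed'' amounts to assuming the lemma. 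A one-row check can fix signs and powers of $q,t$, but it cannot supply the general identity.

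The missing ingredient is a genuine theorem identifying Okounkov's ratios with normalized skew principal specializations. The paper does not derive this from the binomial formula either. It takes it from Lascoux--Warnaar, who set $\mathcal Q_{\lambda/\mu}=t^{n(\mu)-n(\lambda)}(c'_\lambda/c'_\mu)Q_{\lambda/\mu}$, define the binomial coefficient as $\mathcal Q_{\lambda/\mu}[\frac1{1-t}]$, and record that it agrees with the Okounkov--Lassalle coefficients. The lemma is then a one-line substitution; equivalently, it is the coincidence of Lassalle's and Okounkov's definitions.

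A self-contained proof along your lines would instead need, for instance, that $\lambda\mapsto t^{n(\mu)-n(\lambda)}(c'_\lambda/c'_\mu)Q_{\lambda/\mu}[\frac1{1-t}]$ is a polynomial of degree at most $|\mu|$ in the $q^{\lambda_i}$ with Okounkov's symmetry. Uniqueness of interpolation polynomials would then finish the argument, since the vanishing for $|\lambda|\le|\mu|$, $\lambda\neq\mu$, and the value $1$ at $\lambda=\mu$ are immediate. That polynomiality is the real content.
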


\begin{proof}
%We spell out the translation between the two normalizations involved.
Lascoux and Warnaar introduce the normalized skew Macdonald function
\cite[Eq.~(2.7b)]{LascouxWarnaar}
\begin{equation}\label{eq:normalized-skew-Q-proof}
 \mathcal Q_{\lambda/\mu}(X;q,t)
 :=t^{n(\mu)-n(\lambda)}
   \frac{c'_\lambda(q,t)}{c'_\mu(q,t)}
   Q_{\lambda/\mu}(X;q,t).
\end{equation}
Their notation uses a boldface $Q$; we use $\mathcal Q$ here to avoid
confusion with Macdonald's ordinary dual function $Q_{\lambda/\mu}$.

The generalized Macdonald binomial coefficient is then
\cite[Eq.~(2.15)]{LascouxWarnaar}
\begin{equation}\label{eq:LW-binomial-proof}
 \left[\begin{matrix}\lambda\\ \mu\end{matrix}\right]_{q,t}
 :=\mathcal Q_{\lambda/\mu}
       \left[\frac1{1-t};q,t\right].
\end{equation}
This definition is the skew-function
form of the generalized binomial coefficients of Lassalle and
Okounkov; the equivalence is stated explicitly in
\cite[Eq.~(2.15) and the references preceding it]{LascouxWarnaar}.

Substituting \eqref{eq:normalized-skew-Q-proof} into
\eqref{eq:LW-binomial-proof}, and then using
\eqref{eq:Okounkov-binomial}, gives
\[
 \left[\begin{matrix}\lambda\\ \mu\end{matrix}\right]_{q,t}
 =t^{n(\mu)-n(\lambda)}
   \frac{c'_\lambda}{c'_\mu}
   Q_{\lambda/\mu}\left[\frac1{1-t};q,t\right].
\]
Multiplication by $c'_\mu/c'_\lambda$ is exactly
\eqref{eq:skew-binomial}.

Alternatively, the coincidence of Lassalle's definition \cite{Lassalle} of
the $(q,t)$-binomial coefficients with that of Okounkov is equivalent to the Lemma.

As a normalization check, take $\mu=\varnothing$.  The principal
specialization formula
\cite[Chapter~VI, Eq.~(6.11')]{Macdonald}, followed by the stable
limit, gives
\[
 P_\lambda\left[\frac1{1-t};q,t\right]
 =\frac{t^{n(\lambda)}}{c_\lambda(q,t)}.
\]
Since $Q_\lambda=(c_\lambda/c'_\lambda)P_\lambda$, this yields
\[
 t^{-n(\lambda)}Q_\lambda
       \left[\frac1{1-t};q,t\right]
 =\frac1{c'_\lambda(q,t)},
\]
which is the case $\mu=\varnothing$ of the lemma and independently
checks both the power of $t$ and the choice of $c'_\lambda$.
\end{proof}

We can now use this identity by writing
\begin{equation}
\frac{\widehat P_\mu^{[n]}}{n!}\cdot\frac{c'_\mu}{P^*_\mu(\mu)}
=\sum_{\lambda\vdash n}\frac{c'_\mu}{c'_\lambda}
 \left[\begin{matrix}\lambda\\ \mu\end{matrix}\right]_{q,t}P_\lambda
=\sum_{\lambda\vdash n}t^{n(\mu)-n(\lambda)}Q_{\lambda/\mu}\left[\frac1{1-t}\right]P_\lambda
\end{equation}
and observing that, by the reproducing property of the Cauchy kernel,
\begin{equation}
Q_{\lambda/\mu}\left[\frac1{1-t}\right]
=\left\langle \sigma_1\left(\frac{X}{1-q}\right)P_\mu,Q_\lambda\right\rangle_{q,t},
\end{equation}
and
\[
 \frac{P_\mu^*(\mu)}{c'_\mu}
 =(-1)^{|\mu|}
   q^{n(\mu')}t^{-2n(\mu)},
\]
we arrive at
\begin{align}
\widehat{\mathbf P}_\mu&=\sum_{n\ge |\mu|}\frac{\widehat P_\mu^{[n]}}{n!}
=\frac{P^*_\mu(\mu)}{c'_\mu}\sum_\lambda t^{n(\mu)-n(\lambda)}
\left\langle \sigma_1\left(\frac{X}{1-q}\right)P_\mu,Q_\lambda\right\rangle_{q,t}P_\lambda\\
&=
(-1)^{|\mu|}t^{-2n(\mu)}q^{n(\mu')}
\sum_\lambda 
\left\langle \sigma_1\left(\frac{X}{1-q}\right)t^{n(\mu)} P_\mu,Q_\lambda\right\rangle_{q,t}
t^{-n(\lambda)}P_\lambda\\
&=\Dop\sigma_1\left(\frac{X}{1-q}\right) \boldsymbol\nabla P_\mu,\\
&=\Dop\,\Pcal_{1/(1-q)}\,\boldsymbol\nabla f,
\end{align}
where $\Dop$ is the linear operator defined by
\begin{equation}
\Dop P_\mu = t^{-n(\mu)}P_\mu,
\end{equation}
$\Pcal_Z$ denotes multiplication by $\sigma_1[ZX]$, 
and
\begin{equation}\label{eq:bold-nabla}
 \boldsymbol\nabla P_\mu
 =(-1)^{|\mu|}q^{n(\mu')}t^{-n(\mu)}P_\mu.
\end{equation}

We have therefore proved:
\begin{theorem}\label{thm:shifted-representative}
The exponentially normalized hat transform defined in
\eqref{eq:hat-exponential} satisfies
\begin{equation}\label{eq:shifted-representative}
\widehat{\mathbf f}=\Dop 
\Pcal_{1/(1-q)}
\boldsymbol\nabla f.
\end{equation}
Consequently the operator form of the hat transform is precisely
\eqref{eq:intro-Macdonald-transform}.
\end{theorem}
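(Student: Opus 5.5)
By linearity of all three operators and of the hat transform, it suffices to treat $f=P_\mu$. The plan is to compute both sides of \eqref{eq:shifted-representative} in the $P_\lambda$ basis and compare the coefficient of each $P_\lambda$.

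\emph{Left-hand side.} Formula \eqref{eq:hat-exponential-spectral} gives the coefficient of $P_\lambda$ in $\widehat{\mathbf P}_\mu$ as $P_\mu^*(\lambda)/c'_\lambda$, which vanishes unless $\mu\subseteq\lambda$. Write $P_\mu^*(\lambda)=P_\mu^*(\mu)\left[\begin{smallmatrix}\lambda\\ \mu\end{smallmatrix}\right]_{q,t}$ using \eqref{eq:Okounkov-binomial}. Lemma~\ref{lem:skew-binomial} then turns the coefficient into
\[
\frac{P_\mu^*(\mu)}{c'_\mu}\,t^{n(\mu)-n(\lambda)}\,Q_{\lambda/\mu}\!\left[\frac1{1-t};q,t\right].
\]

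\emph{Right-hand side.} Apply the operators in turn. First, $\boldsymbol\nabla P_\mu=(-1)^{|\mu|}q^{n(\mu')}t^{-n(\mu)}P_\mu$. Next, multiplication by $\sigma_1[X/(1-q)]$ has $P_\lambda$-coefficient $\langle \sigma_1[X/(1-q)]P_\mu,Q_\lambda\rangle_{q,t}$, because $\langle P_\lambda,Q_\nu\rangle_{q,t}=\delta_{\lambda\nu}$. Finally, $\Dop$ multiplies that coefficient by $t^{-n(\lambda)}$.

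To evaluate the pairing, use adjointness. The adjoint of multiplication by $P_\mu$ is the skewing operator $Q_\mu^\perp$, so the pairing equals $\langle \sigma_1[X/(1-q)],Q_{\lambda/\mu}\rangle_{q,t}$. The Macdonald Cauchy kernel is $\sigma_1[XY(1-t)/(1-q)]$. Setting $Y=1/(1-t)$ gives $\sigma_1[X/(1-q)]$, so by the reproducing property the pairing is $Q_{\lambda/\mu}[1/(1-t)]$.

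The right-hand coefficient is therefore
\[
(-1)^{|\mu|}q^{n(\mu')}t^{-n(\mu)}\,t^{-n(\lambda)}\,Q_{\lambda/\mu}\!\left[\frac1{1-t}\right].
\]
It matches the left-hand coefficient exactly when
\[
\frac{P_\mu^*(\mu)}{c'_\mu}=(-1)^{|\mu|}q^{n(\mu')}t^{-2n(\mu)}.
\]

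\emph{The main obstacle} is proving this normalization identity in Okounkov's normalization. I would use the evaluation formula $P_\mu^*(q^\mu)=\prod_{s\in\mu}\bigl(q^{a(s)}-t^{l(s)+1}\bigr)\cdot(\text{monomial})$, where $a(s)$ and $l(s)$ are the arm and leg lengths of the box $s$, and factor each term as $-t^{l(s)+1}\bigl(1-q^{a(s)}t^{-l(s)-1}\bigr)$. The product of the factors $1-q^{a(s)}t^{-l(s)-1}$ has to be matched with $c'_\mu(q,t)=\prod_{s}\bigl(1-q^{a(s)+1}t^{l(s)}\bigr)$. Since the two products differ in their $q$-exponents, the monomial prefactor in Okounkov's normalization (the shift conventions built into $P_\mu^*$) is essential here. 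I would therefore carry this out with the paper's convention fixed. As a first check I would verify $\mu=(1)$ directly, and then confirm that the exponents $\sum_s l(s)=n(\mu)$ and $\sum_s a(s)=n(\mu')$ combine to produce $q^{n(\mu')}t^{-2n(\mu)}$.

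Once this constant is confirmed, the two coefficients agree for every $\lambda$. Summing over $\lambda$ gives \eqref{eq:shifted-representative} for $P_\mu$, and linearity extends it to all $f$.
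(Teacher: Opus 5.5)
Your reduction is exactly the paper's argument. You compare $P_\lambda$-coefficients, rewrite $P_\mu^*(\lambda)$ through the binomial coefficient and Lemma~\ref{lem:skew-binomial}, and identify $Q_{\lambda/\mu}[1/(1-t)]$ with $\langle\sigma_1[X/(1-q)]P_\mu,Q_\lambda\rangle_{q,t}$ by the reproducing property. Everything then reduces to the constant $P_\mu^*(\mu)/c'_\mu=(-1)^{|\mu|}q^{n(\mu')}t^{-2n(\mu)}$, which the paper simply quotes as Okounkov's diagonal evaluation. There is a small labelling slip: the operator you need is the adjoint of multiplication by $P_\mu$, that is $P_\mu^\perp$. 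The adjoint of multiplication by $Q_\mu$ would introduce an extra factor $b_\mu(q,t)$. Your conclusion $Q_{\lambda/\mu}$ is nevertheless the right one.

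The step that would fail is your proof of that constant. The evaluation formula you start from, $P_\mu^*(q^\mu)=\prod_s\bigl(q^{a(s)}-t^{l(s)+1}\bigr)\cdot(\text{monomial})$, is false. It uses the hook $(a(s),\,l(s)+1)$ of $c_\mu$ instead of the hook $(a(s)+1,\,l(s))$ of $c'_\mu$. No monomial prefactor can repair this, because $\prod_s\bigl(1-q^{a(s)}t^{-l(s)-1}\bigr)$ and $\prod_s\bigl(1-q^{a(s)+1}t^{l(s)}\bigr)$ are not monomial multiples of each other. The case $\mu=(1)$ already shows it. There $P^*_{(1)}(x)=\sum_i t^{1-i}(x_i-1)$, so $P^*_{(1)}(q^{(1)})=q-1$, whereas your formula gives a monomial times $1-t$.

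The correct diagonal evaluation, which one checks directly on $(1)$, $(2)$, $(1,1)$ and $(2,1)$ from Okounkov's reverse-tableau formula, is
\[
P_\mu^*(q^\mu;q,t)=\prod_{s\in\mu}q^{a'(s)}\,t^{-2l'(s)}\bigl(q^{a(s)+1}t^{l(s)}-1\bigr).
\]
Since $\sum_s a'(s)=n(\mu')$, $\sum_s l'(s)=n(\mu)$ and $\prod_s\bigl(q^{a(s)+1}t^{l(s)}-1\bigr)=(-1)^{|\mu|}c'_\mu$, the required constant follows at once. The $t$-power comes from the co-legs with exponent $-2$, not from legs. With this input, whether quoted as the paper does or derived from the tableau formula, your proof is complete and coincides with the paper's.
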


\subsection{The check transform}
The formula obtained above   shows that the hat transform is not
merely a spectral definition: after exponential normalization it is
a composition of three elementary linear operators.  Namely, put
\begin{equation}\label{eq:H-diagonal}
 \mathsf H_{q,t}P_\mu
 :=\frac{P_\mu^*(q^\mu;q,t)}{c'_\mu(q,t)}P_\mu.
\end{equation}
%%%%
Okounkov observes that the diagonal normalization of the interpolation
polynomials involves a choice.  It is the appropriate choice when one
wants the eigenvalues themselves to be his $P_\mu^*$, but it need not
be built into the transformation of the ordinary symmetric functions.
We accordingly define the \emph{check transform} by
\begin{equation}\label{eq:check-transform-definition}
  {
 \mathbf{\check f}
 :=\mathbf{\widehat{\mathsf H_{q,t}^{-1}f}}. }
\end{equation}
We denote its shifted eigenvalue by
\begin{equation}\label{eq:sharp-eigenvalue-definition}
  {
 f^\#:=\bigl(\mathsf H_{q,t}^{-1}f\bigr)^*. }
\end{equation}
Thus, for every $\lambda\vdash n$,
\begin{equation}\label{eq:check-spectral-property}
 \check f^{[n]}\times_{q,t}P_\lambda
 =f^\#(\lambda)P_\lambda.
\end{equation}
In particular, on the Macdonald basis these eigenvalues are the
normalized interpolation polynomials
\[
 \check P_\mu^*(\lambda)
 :=(-1)^{|\mu|}q^{-n(\mu')}t^{2n(\mu)}
   P_\mu^*(\lambda),
 \qquad
 \check P_\mu^*(\mu)=c'_\mu(q,t).
\]
The functions $p_\mu^\#$ form a filtered basis of the shifted
Macdonald algebra.  It is therefore natural to define coefficients
$d^{\#,\lambda}_{\mu\nu}(q,t)$ by
\begin{equation}\label{eq:check-structure-constants}
  {
 p_\mu^\#p_\nu^\#
 =\sum_\lambda d^{\#,\lambda}_{\mu\nu}(q,t)p_\lambda^\#,
 \qquad
 \check p_\mu\times_{q,t}\check p_\nu
 =\sum_\lambda d^{\#,\lambda}_{\mu\nu}(q,t)\check p_\lambda. }
\end{equation}
These are the direct $(q,t)$-analogues, in the unitriangular
normalization, of the stable coefficients $d_{\mu\nu}^{\lambda}$.
Theorem~\ref{thm:shifted-representative} gives
\begin{equation}\label{eq:hat-operator-announcement}
  {
 \mathbf{\widehat f}
 =\Dop\,M_{\sigma_1(X/(1-q))}\,\Dop^{-1}
 \mathsf H_{q,t}f. }
\end{equation}
At the same time it gives the simpler operator formula
\begin{equation}\label{eq:check-operator-announcement}
  {
 \mathbf{\check f}
 =\Dop\,M_{\sigma_1(X/(1-q))}\,\Dop^{-1}f. }
\end{equation}
If $f$ is homogeneous of degree $m$, taking the lowest homogeneous
component in this formula gives
\begin{equation}\label{eq:hat-lowest-component}
 [\mathbf{\widehat f}]_m=\mathsf H_{q,t}f,
 \qquad
 \widehat f^{[m]}=m!\,\mathsf H_{q,t}f.
\end{equation}
For the check transform, the corresponding identities are
\begin{equation}\label{eq:check-lowest-component}
  {
 [\mathbf{\check f}]_m=f,
 \qquad
 \check f^{[m]}=m!f. }
\end{equation}
The factorial in the second identity is solely due to the passage
from exponential normalization to the unnormalized homogeneous
components.

\subsection{Jack degeneration}
The check normalization also has the expected Jack degeneration,
provided that one rescales the variables in the ordinary, rather than
plethystic, sense.  To avoid ambiguity, for a scalar $\varepsilon$ we
write $\varepsilon X$ for the scalar dilation characterized by
\begin{equation}\label{eq:scalar-dilation}
 p_r(\varepsilon X)=\varepsilon^r p_r(X).
\end{equation}
In particular, the notation in the following proposition does not
mean the plethystic substitution
$p_r(X)\mapsto(1-q^r)p_r(X)$.

\begin{proposition}[Jack degeneration of the check transform]
\label{prop:check-Jack-degeneration}
Let $m=|\mu|$, set $q=t^\alpha$, and put $\varepsilon=1-q$.  Then,
in the degree completion,
\begin{equation}\label{eq:check-Jack-exponential-limit}
  {
 \lim_{t\to1}\varepsilon^{-m}
 \mathbf{\check P}_\mu(\varepsilon X;q,t)
 =P_\mu^{(\alpha)}(X)\exp(p_1). }
\end{equation}
Equivalently, for the nonexponentially normalized completed series,
\begin{equation}\label{eq:check-Jack-ordinary-limit}
  {
 \lim_{t\to1}\varepsilon^{-m}
 \check P_\mu(\varepsilon X;q,t)
 =\frac{m!P_\mu^{(\alpha)}(X)}{(1-p_1)^{m+1}}. }
\end{equation}
\end{proposition}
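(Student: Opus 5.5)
Starting from the operator formula \eqref{eq:check-operator-announcement}, applied to $f=P_\mu$, we get
\[
 \mathbf{\check P}_\mu=\Dop\,\sigma_1\!\Bigl(\frac{X}{1-q}\Bigr)\,t^{n(\mu)}P_\mu
 =\sum_{\lambda\supseteq\mu} t^{n(\mu)-n(\lambda)}
 \Bigl\langle \sigma_1\Bigl(\frac{X}{1-q}\Bigr)P_\mu,Q_\lambda\Bigr\rangle_{q,t}P_\lambda .
\]
The plan is to take the limit term by term, treating the three factors separately. The diagonal operator $\Dop^{\pm1}$ acts on each $P_\lambda$ by a power of $t$ with a fixed exponent $n(\lambda)$, so it tends to the identity as $t\to1$ with $\lambda$ fixed. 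The substance therefore lies in the limit of $\varepsilon^{-m}\,\sigma_1\bigl(\tfrac{\varepsilon X}{1-q}\bigr)P_\mu(\varepsilon X;q,t)$, where $\varepsilon X$ denotes the scalar dilation \eqref{eq:scalar-dilation}.

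\textbf{The Cauchy factor.} With $\varepsilon=1-q$ we have
\[
 \sigma_1\Bigl[\frac{X}{1-q}\Bigr]=\exp\Bigl(\sum_r \frac{p_r}{r(1-q^r)}\Bigr).
\]
Under scalar dilation $p_r\mapsto\varepsilon^r p_r$, the exponent becomes $\sum_r \varepsilon^r p_r/(r(1-q^r))$. Since $(1-q^r)/(1-q)\to r$, the $r=1$ term tends to $p_1$, and the terms with $r\ge2$ are $O(\varepsilon^{r-1})\to0$. Hence $\sigma_1$ tends to $\exp(p_1)$, coefficientwise in each degree.

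\textbf{The Macdonald factor.} By homogeneity, $\varepsilon^{-m}P_\mu(\varepsilon X;q,t)=P_\mu(X;q,t)$ exactly, and the classical limit gives $P_\mu(X;t^\alpha,t)\to P_\mu^{(\alpha)}(X)$.

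\textbf{Combining the factors.} Write $\mathbf{\check P}_\mu=\Dop\,G$ with $G=\mathcal P_{1/(1-q)}\Dop^{-1}P_\mu$. It remains to check that $\Dop$ commutes with the limit, even though it is diagonal in the $P_\lambda(X;q,t)$ basis, which itself depends on $t$, and the dilation does not commute with $\Dop$ on the nose. I would argue degree by degree. In degree $n$, $\Dop=\sum_{\lambda\vdash n}t^{-n(\lambda)}\pi_\lambda$, where $\pi_\lambda$ is the projection onto $P_\lambda$. The dilation acts on degree $n$ by the scalar $\varepsilon^n$, so it commutes with $\Dop$. Thus the degree-$n$ part of $\varepsilon^{-m}\mathbf{\check P}_\mu(\varepsilon X)$ equals $\Dop\bigl[\varepsilon^{-m}G(\varepsilon X)\bigr]_n$. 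The bracket converges to the degree-$n$ part of $P_\mu^{(\alpha)}e^{p_1}$. Meanwhile $\Dop$ restricted to degree $n$ is a matrix, in the power-sum basis, whose entries are rational in $q,t$ and converge to the identity, because the $P_\lambda(q,t)$ converge to the Jack basis, a basis, and each eigenvalue $t^{-n(\lambda)}\to1$. Continuity then gives \eqref{eq:check-Jack-exponential-limit}; the $\Dop^{-1}$ inside $G$ is handled identically, contributing $t^{n(\mu)}\to1$.

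\textbf{The main obstacle.} I expect this matrix-convergence step to be the delicate point. Along $q=t^\alpha$ with $t\to1$, the coefficients of the $P_\lambda$ in the power-sum basis have well-defined limits, but one must confirm there are no $0/0$ issues. I would cite the standard Jack limit of Macdonald polynomials (Macdonald, Ch.~VI.10) for this.

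\textbf{The ordinary normalization.} For \eqref{eq:check-Jack-ordinary-limit}, we have $\check P_\mu^{[n]}=n!\,[\mathbf{\check P}_\mu]_n$, and the limit in degree $n$ is $n!\,P_\mu^{(\alpha)}p_1^{n-m}/(n-m)!$. Summing over $n$ gives
\[
 P_\mu^{(\alpha)}\sum_{k\ge0}(k+m)!\,\frac{p_1^k}{k!}
 =\frac{m!\,P_\mu^{(\alpha)}}{(1-p_1)^{m+1}},
\]
using the identity $\sum_k\binom{k+m}{k}x^k=(1-x)^{-m-1}$.
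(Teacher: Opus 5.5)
Your proof is correct and follows essentially the same route as the paper. Both arguments commute the scalar dilation past $\Dop$ (which preserves degree) to reduce to $\Dop\,M_{\sigma_1(\varepsilon X/(1-q))}\,\Dop^{-1}P_\mu(X;q,t)$. They then take the limits of the Cauchy factor, of $\Dop^{\pm1}$ and of $P_\mu(X;t^\alpha,t)$ separately, and finally sum $\sum_{r\geq0}\binom{m+r}{m}p_1^r$.

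Your power-sum matrix argument for $\Dop\to\mathrm{id}$ in each degree spells out a step the paper states in one line. Your remark that the dilation does not commute with $\Dop$ ``on the nose'' should be dropped: as you then observe yourself, it commutes exactly.
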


\begin{proof}
Let $S_\varepsilon$ denote the scalar dilation
$S_\varepsilon f(X)=f(\varepsilon X)$.  Since $\Dop$ preserves
homogeneous degree, it commutes with $S_\varepsilon$, while
$S_\varepsilon P_\mu=\varepsilon^mP_\mu$.  Formula
\eqref{eq:check-operator-announcement} therefore gives
\[
 \varepsilon^{-m}S_\varepsilon\mathbf{\check P}_\mu
 =\Dop\,
   M_{\sigma_1(\varepsilon X/(1-q))}\,
   \Dop^{-1}P_\mu(X;q,t).
\]
Now
\begin{align*}
 \sigma_1\left(\frac{\varepsilon X}{1-q}\right)
 &=\exp\left(
    \sum_{r\geq1}\frac{\varepsilon^r}{1-q^r}
                    \frac{p_r(X)}r\right),\\
 \frac{\varepsilon^r}{1-q^r}
 &=\frac{(1-q)^{r-1}}{1+q+\cdots+q^{r-1}}.
\end{align*}
The last expression tends to $1$ for $r=1$ and to $0$ for
$r>1$.  Hence the Cauchy factor tends coefficientwise to
$\exp(p_1)$.  Moreover,
\[
 \Dop P_\lambda=t^{-n(\lambda)}P_\lambda\longrightarrow P_\lambda,
 \qquad
 P_\mu(X;t^\alpha,t)\longrightarrow P_\mu^{(\alpha)}(X).
\]
This proves \eqref{eq:check-Jack-exponential-limit}.

The component of degree $m+r$ on its right-hand side is
\[
 P_\mu^{(\alpha)}\frac{p_1^r}{r!}.
\]
Passing back from exponential normalization multiplies this component
by $(m+r)!$.  Summing over $r\geq0$ gives
\[
 \sum_{r\geq0}\frac{(m+r)!}{r!}
 P_\mu^{(\alpha)}p_1^r
 =m!P_\mu^{(\alpha)}
   \sum_{r\geq0}\binom{m+r}{m}p_1^r
 =\frac{m!P_\mu^{(\alpha)}}{(1-p_1)^{m+1}},
\]
which proves \eqref{eq:check-Jack-ordinary-limit}.
\end{proof}

\section{The Nazarov--Sklyanin operators}
\subsection{Generating series}
Let $A^{(k)}$ be the Macdonald operators at infinity of
Nazarov--Sklyanin.  Their generating series is normalized by
\[
 A(u)=1+\sum_{k\geq1}\frac{A^{(k)}}{(u;t^{-1})_k},
 \qquad
 (u;t^{-1})_k=\prod_{r=0}^{k-1}(1-ut^{-r}),
\]
and
\begin{equation}\label{eq:NS-spectrum}
 A(u)P_\lambda
 =\prod_{i\geq1}
   \frac{q^{-\lambda_i}-ut^{1-i}}{1-ut^{1-i}}P_\lambda.
\end{equation}
Define $a_k(\lambda)$ by
\begin{equation}\label{eq:eigenvalue-definition}
 \prod_{i\geq1}
   \frac{q^{-\lambda_i}-ut^{1-i}}{1-ut^{1-i}}
 =1+\sum_{k\geq1}\frac{a_k(\lambda)}{(u;t^{-1})_k}.
\end{equation}
Thus $A^{(k)}P_\lambda=a_k(\lambda)P_\lambda$ and
$a_k(\lambda)=0$ if $\ell(\lambda)<k$.

We seek a completed series
\[
 \Fcal_k=\sum_{n\geq k}\Fcal_k^{[n]}
\]
such that multiplication by $\Fcal_k^{[n]}$ for
$\times_{q,t}$ coincides with $A^{(k)}$ in degree $n$.

\begin{proposition}[Spectral reconstruction]\label{prop:reconstruction}
There is a unique such series, and its homogeneous components are
\begin{equation}\label{eq:spectral-reconstruction}
 \Fcal_k^{[n]}
 =n!\sum_{\lambda\vdash n}
   \frac{a_k(\lambda)}{j_\lambda(q,t)}J_\lambda(X;q,t).
\end{equation}
Equivalently,
\begin{equation}\label{eq:spectral-reconstruction-P}
 \frac{\Fcal_k^{[n]}}{n!}
 =\sum_{\lambda\vdash n}
   a_k(\lambda)\frac{P_\lambda(X;q,t)}{c'_\lambda(q,t)}.
\end{equation}
\end{proposition}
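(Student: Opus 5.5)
The argument is the same spectral computation as in Proposition~\ref{prop:hat-characterization}, now carried out in the $J$-basis with \eqref{eq:diagonal-product}. The requirement is that, for each $n\geq 0$, the operator $g\mapsto \Fcal_k^{[n]}\times_{q,t}g$ on $\Sym_n$ coincides with $A^{(k)}$. Since $A^{(k)}P_\lambda=a_k(\lambda)P_\lambda$ and $J_\lambda=c_\lambda P_\lambda$, this is equivalent to
\[
 \Fcal_k^{[n]}\times_{q,t}J_\lambda=a_k(\lambda)J_\lambda
 \qquad\text{for all }\lambda\vdash n .
\]

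\emph{Existence.} Substitute the proposed expression \eqref{eq:spectral-reconstruction} into the left-hand side and apply \eqref{eq:diagonal-product}. Only the term indexed by $\lambda$ survives, which gives
\[
 n!\,\frac{a_k(\lambda)}{j_\lambda}\cdot\frac{j_\lambda}{n!}\,J_\lambda=a_k(\lambda)J_\lambda .
\]

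\emph{Uniqueness.} Expand an arbitrary candidate as $\Fcal^{[n]}=\sum_{\nu\vdash n}x_\nu J_\nu$. By \eqref{eq:diagonal-product}, we have $\Fcal^{[n]}\times_{q,t}J_\lambda=x_\lambda\,(j_\lambda/n!)\,J_\lambda$. Since $j_\lambda\neq0$ and the $J_\lambda$ form a basis of $\Sym_n$, the condition forces $x_\lambda=n!\,a_k(\lambda)/j_\lambda$. Thus each homogeneous component is determined, and hence so is the completed series $\Fcal_k$.

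\emph{Lower bound on degrees.} We must also check that $\Fcal_k^{[n]}=0$ for $n<k$, so that the series indeed starts in degree $k$. This follows from the remark after \eqref{eq:eigenvalue-definition} that $a_k(\lambda)=0$ whenever $\ell(\lambda)<k$, because $\ell(\lambda)\leq|\lambda|=n<k$. I would justify that remark briefly. For $i>\ell(\lambda)$ the factors in the product equal $1$, so the product is a rational function of $u$ with poles only at $u=t^{i-1}$, $i\leq\ell(\lambda)$, that tends to a constant at infinity. Expanding it on the basis $1/(u;t^{-1})_k$ therefore involves only $k\leq\ell(\lambda)$.

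\emph{The $P$-basis form.} The equivalent formula \eqref{eq:spectral-reconstruction-P} is a one-line rewriting: $J_\lambda/j_\lambda=c_\lambda P_\lambda/(c_\lambda c'_\lambda)=P_\lambda/c'_\lambda$. The whole proof is routine. The only point needing care is the vanishing of $a_k$ for short partitions, which the paper has already stated.
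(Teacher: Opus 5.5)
Your proposal is correct and follows the paper's proof. The paper also expands a candidate in the $J$-basis, reads off from \eqref{eq:diagonal-product} that multiplication by it has eigenvalue $x_\lambda j_\lambda/n!$ on $J_\lambda$ (giving existence and uniqueness together), and converts to the $P$-basis via $J_\lambda/j_\lambda=P_\lambda/c'_\lambda$. Your extra check that $\Fcal_k^{[n]}=0$ for $n<k$ is sound: the eigenvalue product has simple poles only at $u=t^{i-1}$ with $i\le\ell(\lambda)$ and tends to $1$ at infinity, and this makes explicit what the paper leaves to the remark after \eqref{eq:eigenvalue-definition}.
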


\begin{proof}
If $F=\sum_{\lambda\vdash n}f_\lambda J_\lambda$, then
\eqref{eq:diagonal-product} shows that multiplication by $F$ has
eigenvalue $f_\lambda j_\lambda/n!$ on $J_\lambda$.  This proves
both existence and uniqueness.  The second expression follows from
$J_\lambda/j_\lambda=P_\lambda/c'_\lambda$.
\end{proof}

It is convenient to introduce the exponentially normalized series
\begin{equation}\label{eq:exponential-normalization}
 \mathbf F_k(X)
 :=\sum_{n\geq k}\frac{\Fcal_k^{[n]}(X)}{n!}.
\end{equation}

\subsection{The vacuum series}

Define
\begin{equation}\label{eq:vacuum}
 \Phi(X)
 :=\sum_\lambda\frac{P_\lambda(X;q,t)}{c'_\lambda(q,t)}.
\end{equation}
Recall that $\Dop$ is the diagonal operator
\begin{equation}\label{eq:Dt}
 \Dop P_\lambda(X;q,t)=t^{-n(\lambda)}P_\lambda(X;q,t),
 \qquad
 n(\lambda)=\sum_{i\geq1}(i-1)\lambda_i.
\end{equation}
We use the notation
\[
 \sigma_1(Z)
 =\operatorname{Exp}[Z]
 =\exp\left(\sum_{r\geq1}\frac{p_r(Z)}r\right)
 =\sum_{n\geq0}h_n(Z).
\]

\begin{proposition}\label{prop:vacuum}
The vacuum series has the closed form
\begin{equation}\label{eq:vacuum-closed}
 \Phi(X)=\Dop\,\sigma_1\left(\frac{X}{1-q}\right).
\end{equation}
Moreover,
\begin{equation}\label{eq:F-as-A-vacuum}
 \mathbf F_k=A^{(k)}\Phi.
\end{equation}
\end{proposition}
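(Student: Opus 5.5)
The closed form \eqref{eq:vacuum-closed} comes from the Cauchy identity. We expand $\sigma_1\bigl(X/(1-q)\bigr)$ in the Macdonald basis. The reproducing kernel for $\scalar{\cdot}{\cdot}$ is $\sigma_1\bigl(XY(1-t)/(1-q)\bigr)=\sum_\lambda P_\lambda(X)Q_\lambda(Y)$. Specializing $Y=1/(1-t)$ plethystically turns the kernel into $\sigma_1\bigl(X/(1-q)\bigr)$, so
\[
 \sigma_1\left(\frac{X}{1-q}\right)=\sum_\lambda Q_\lambda\left[\frac1{1-t};q,t\right]P_\lambda(X;q,t).
\]
The case $\mu=\varnothing$ of Lemma~\ref{lem:skew-binomial}, checked in its proof, gives $Q_\lambda[1/(1-t)]=t^{n(\lambda)}/c'_\lambda$. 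Applying $\Dop$ then cancels the factor $t^{n(\lambda)}$ termwise, which yields $\Dop\sigma_1(X/(1-q))=\sum_\lambda P_\lambda/c'_\lambda=\Phi$.

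Equivalently, this is Theorem~\ref{thm:shifted-representative} with $\mu=\varnothing$, i.e.\ $f=1$. Indeed $P_\varnothing^*\equiv1$, so \eqref{eq:hat-exponential-spectral} gives $\mathbf{\widehat 1}=\Phi$. Also $\boldsymbol\nabla 1=1$, so the theorem gives $\mathbf{\widehat 1}=\Dop\Pcal_{1/(1-q)}1$. I would state the direct Cauchy argument and mention this as a cross-check.

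For \eqref{eq:F-as-A-vacuum}, I would combine Proposition~\ref{prop:reconstruction} in the form \eqref{eq:spectral-reconstruction-P} with the definition \eqref{eq:exponential-normalization}. Summing over $n$ gives
\[
 \mathbf F_k=\sum_\lambda a_k(\lambda)\frac{P_\lambda}{c'_\lambda},
\]
where the terms with $\ell(\lambda)<k$ vanish automatically, so the sum may be taken over all $\lambda$. Since $A^{(k)}P_\lambda=a_k(\lambda)P_\lambda$ and $A^{(k)}$ preserves degree, $A^{(k)}$ acts on the degree completion componentwise. Applying it term by term to $\Phi$ gives exactly this sum.

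The main obstacle is only bookkeeping. One has to confirm that the plethystic specialization $Y\mapsto 1/(1-t)$ of the Cauchy kernel gives $X/(1-q)$: the factor $(1-t)$ in the kernel cancels, leaving $p_r(X)/(1-q^r)$. One also has to match the powers of $t$ using the normalization check in Lemma~\ref{lem:skew-binomial}. The convergence issues are harmless, because everything is graded and each homogeneous component is a finite sum.
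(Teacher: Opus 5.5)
Your proposal is correct and follows essentially the same route as the paper. You specialize the Macdonald Cauchy kernel at $Y=1/(1-t)$ using $Q_\lambda[1/(1-t)]=t^{n(\lambda)}/c'_\lambda$ and apply $\Dop$, then obtain $\mathbf F_k=A^{(k)}\Phi$ by comparing termwise with the spectral reconstruction formula; your cross-check via Theorem~\ref{thm:shifted-representative} at $f=1$ is a harmless extra.
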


\begin{proof}
The Macdonald Cauchy kernel is
\[
 \Pi_{q,t}(X,Y)
 =\sigma_1\left(\frac{1-t}{1-q}XY\right)
 =\sum_\lambda P_\lambda(X;q,t)Q_\lambda(Y;q,t).
\]
The principal specialization is
\[
 P_\lambda\left[\frac1{1-t};q,t\right]
 =\frac{t^{n(\lambda)}}{c_\lambda(q,t)},
\]
and hence
\[
 Q_\lambda\left[\frac1{1-t};q,t\right]
 =\frac{t^{n(\lambda)}}{c'_\lambda(q,t)}.
\]
Specializing the second alphabet of the Cauchy kernel therefore gives
\[
 \sigma_1\left(\frac{X}{1-q}\right)
 =\sum_\lambda t^{n(\lambda)}
   \frac{P_\lambda(X;q,t)}{c'_\lambda(q,t)}.
\]
Application of $\Dop$ proves \eqref{eq:vacuum-closed}.  Finally,
$A^{(k)}$ and $\Dop$ are simultaneously diagonal in the Macdonald
basis; comparison with \eqref{eq:spectral-reconstruction-P} proves
\eqref{eq:F-as-A-vacuum}.
\end{proof}

\subsection{Reduction to a Hall--Littlewood sum}

Nazarov and Sklyanin proved the differential formula
\begin{equation}\label{eq:NS-differential}
 A^{(k)}
 =\sum_{\ell(\rho)=k}
   q^{-|\rho|}Q_\rho(X;t)P_\rho^\perp,
\end{equation}
where $P_\rho,Q_\rho$ are Hall--Littlewood functions of parameter
$t$, while $P_\rho^\perp$ denotes the adjoint of multiplication by
the Hall--Littlewood function $P_\rho(X;t)$ for the Macdonald scalar
product; thus
\[
 \langle P_\rho f,g\rangle_{q,t}
 =\langle f,P_\rho^\perp g\rangle_{q,t}.
\]
This notation is kept distinct both from Okounkov's interpolation
polynomial $P_\rho^*$ and from the  Foulkes derivative
$D_{P_\rho}$.  The reproducing property gives
\[
 P_\rho^\perp\sigma_1\left(\frac{X}{1-q}\right)
 =P_\rho\left[\frac1{1-t};t\right]
  \sigma_1\left(\frac{X}{1-q}\right).
\]
For Hall--Littlewood functions,
\[
 P_\rho\left[\frac1{1-t};t\right]
 =\frac{t^{n(\rho)}}{b_\rho(t)},
 \qquad Q_\rho(X;t)=b_\rho(t)P_\rho(X;t).
\]
The factors $b_\rho(t)$ cancel.

\begin{proposition}\label{prop:HL-reduction}
Put
\begin{equation}\label{eq:Theta-definition}
 \Theta_k(Z;t)
 :=\sum_{\ell(\rho)=k}t^{n(\rho)}P_\rho(Z;t).
\end{equation}
Then
\begin{equation}\label{eq:F-Theta}
  {
 \mathbf F_k(X)
 =\Dop\left[
   \sigma_1\left(\frac{X}{1-q}\right)
   \Theta_k\left(\frac Xq;t\right)
 \right]. }
\end{equation}
\end{proposition}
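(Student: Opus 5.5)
We start from Proposition~\ref{prop:vacuum}, which gives $\mathbf F_k=A^{(k)}\Phi$ with $\Phi=\Dop\,\sigma_1(X/(1-q))$. Since $A^{(k)}$ and $\Dop$ are both diagonal in the Macdonald basis $P_\lambda(X;q,t)$, they commute. Hence
\[
 \mathbf F_k=\Dop\,A^{(k)}\sigma_1\left(\frac{X}{1-q}\right),
\]
and it suffices to show that $A^{(k)}\sigma_1(X/(1-q))=\sigma_1(X/(1-q))\,\Theta_k(X/q;t)$.

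For this we insert the Nazarov--Sklyanin differential formula \eqref{eq:NS-differential} and apply it term by term. By the reproducing property recalled before the statement, $P_\rho^\perp$ acts on the Cauchy series $\sigma_1(X/(1-q))$ as a scalar, namely $P_\rho[1/(1-t);t]=t^{n(\rho)}/b_\rho(t)$. Using $Q_\rho=b_\rho P_\rho$, the $b_\rho$ factors cancel, and each term $q^{-|\rho|}Q_\rho(X;t)P_\rho^\perp$ contributes
\[
 q^{-|\rho|}t^{n(\rho)}P_\rho(X;t)\,\sigma_1\left(\frac{X}{1-q}\right).
\]
Since $P_\rho(X;t)$ is homogeneous of degree $|\rho|$, we have $q^{-|\rho|}P_\rho(X;t)=P_\rho(X/q;t)$ in the scalar-dilation sense of \eqref{eq:scalar-dilation}; for the scalar $1/q$ this agrees with the plethystic substitution. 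Summing over $\ell(\rho)=k$ gives exactly $\Theta_k(X/q;t)\,\sigma_1(X/(1-q))$, which is \eqref{eq:F-Theta}.

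The main point to check carefully is the reproducing step. We must confirm that, with $\perp$ taken for the Macdonald scalar product $\langle\,,\rangle_{q,t}$, the element $\sigma_1(X/(1-q))$ is the kernel for which $f^\perp\sigma_1(X/(1-q))=f[1/(1-t)]\,\sigma_1(X/(1-q))$. This holds because $\sigma_1(X/(1-q))=\Pi_{q,t}(X,Y)$ with $Y$ specialized to $1/(1-t)$, and the adjoint of multiplication by $f(X)$ acts on $\Pi_{q,t}(X,Y)$ as multiplication by $f(Y)$. That $f(Y)$ fact follows by expanding $\Pi_{q,t}$ in the dual bases $P_\lambda,Q_\lambda$.

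We also need the Hall--Littlewood principal specialization $P_\rho[1/(1-t);t]=t^{n(\rho)}/b_\rho(t)$. This is the $q=0$ case of the Macdonald formula: $c_\rho(0,t)=1/b_\rho(t)$ up to the standard convention. Finally, since everything lives in the degree completion, exchanging the sum over $\rho$ with the infinite sums is harmless: in each degree only finitely many terms contribute.
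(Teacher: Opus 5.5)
Your proposal is correct and follows the paper's own argument. You commute $A^{(k)}$ past $\Dop$ in $\mathbf F_k=A^{(k)}\Phi$, apply the Nazarov--Sklyanin formula term by term via the reproducing property, cancel the $b_\rho(t)$ using the Hall--Littlewood principal specialization, and absorb $q^{-|\rho|}$ into $X\mapsto X/q$. One side remark is wrong: at $q=0$ one has $c_\rho(0,t)=b_\rho(t)$, not $1/b_\rho(t)$. This is precisely why $P_\rho[1/(1-t);t]=t^{n(\rho)}/c_\rho(0,t)=t^{n(\rho)}/b_\rho(t)$ is the $q=0$ case of the Macdonald formula, so the identity you actually use is unaffected.
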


The factor $q^{-|\rho|}$ has thus been absorbed by the alphabet
change $X\mapsto X/q$.  The factor $t^{n(\rho)}$ is itself a
principal specialization:
\[
 Q_\rho\left[\frac1{1-t};t\right]=t^{n(\rho)}.
\]
Summing \eqref{eq:Theta-definition} over all lengths and using the
Hall--Littlewood Cauchy identity gives
\begin{equation}\label{eq:sum-all-Theta}
 \sum_{k\geq0}\Theta_k(Z;t)=\sigma_1(Z).
\end{equation}

\subsection{Evaluation of the fixed-length sum}

The specialization with an auxiliary variable $u$ is
\begin{equation}\label{eq:HL-u-specialization}
 Q_\rho\left[\frac{1-u}{1-t};t\right]
 =t^{n(\rho)}(u;t^{-1})_{\ell(\rho)}.
\end{equation}
The Hall--Littlewood Cauchy identity therefore yields the Newton
expansion
\begin{equation}\label{eq:Newton-Theta}
  {
 \sigma_1((1-u)X)
 =\sum_{k\geq0}(u;t^{-1})_k\Theta_k(X;t). }
\end{equation}

\begin{proposition}[Finite formula]\label{prop:finite-Theta}
For every $k\geq0$,
\begin{equation}\label{eq:finite-Theta}
  {
 \Theta_k(X;t)
 =\frac1{(t;t)_k}
  \sum_{j=0}^k(-1)^{k-j}t^{\binom{k-j}{2}}
  \begin{bmatrix}k\\j\end{bmatrix}_t
  \sigma_1((1-t^j)X), }
\end{equation}
where $(t;t)_k=\prod_{r=1}^k(1-t^r)$.
\end{proposition}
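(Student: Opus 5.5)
We invert the Newton expansion \eqref{eq:Newton-Theta}, treating it as a polynomial identity in the auxiliary variable $u$. The right-hand side is a ``Newton series'' in $u$ with respect to the nodes $1,t,t^2,\dots$: the factor $(u;t^{-1})_k=\prod_{r=0}^{k-1}(1-ut^{-r})$ vanishes at $u=t^j$ for $0\le j<k$. Specializing $u=t^j$ in \eqref{eq:Newton-Theta} truncates the sum at $k\le j$. We obtain the finite triangular system
\[
 \sigma_1((1-t^j)X)=\sum_{k=0}^{j}(t^j;t^{-1})_k\,\Theta_k(X;t),\qquad j\geq0,
\]
which determines the $\Theta_k$ recursively, since $(t^j;t^{-1})_j\neq0$. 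The plan is to show that the proposed closed formula \eqref{eq:finite-Theta} solves this system. This is the $q$-analogue of inverting a Newton interpolation, i.e.\ a $q$-binomial inversion.

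First we rewrite the coefficients. We have
\[
(t^j;t^{-1})_k=\prod_{r=0}^{k-1}(1-t^{j-r})=\frac{(t;t)_j}{(t;t)_{j-k}},
\]
so the system reads
\[
\frac{\sigma_1((1-t^j)X)}{(t;t)_j}=\sum_{k\le j}\frac{\Theta_k}{(t;t)_{j-k}}.
\]
Put $a_j=\sigma_1((1-t^j)X)/(t;t)_j$ and $b_k=\Theta_k$. In generating function form this is $A(z)=B(z)\,e_t(z)$, where $e_t(z)=\sum_n z^n/(t;t)_n=1/(z;t)_\infty$ by the $q$-binomial theorem. Hence
\[
B(z)=A(z)\,(z;t)_\infty=A(z)\sum_n(-1)^n t^{\binom n2}z^n/(t;t)_n
\]
by Euler's identity. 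Extracting the coefficient of $z^k$ gives
\[
\Theta_k=\sum_{j}\frac{\sigma_1((1-t^j)X)}{(t;t)_j}\cdot\frac{(-1)^{k-j}t^{\binom{k-j}2}}{(t;t)_{k-j}},
\]
and multiplying and dividing by $(t;t)_k$ produces exactly the Gaussian binomial $\begin{bmatrix}k\\j\end{bmatrix}_t$ in \eqref{eq:finite-Theta}.

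The step needing care is justifying the specialization $u=t^j$ inside an identity between formal series in $X$. We work degree by degree in $X$. In each fixed degree both sides of \eqref{eq:Newton-Theta} are polynomials in $u$: indeed $\Theta_k$ has lowest degree $k$, since $\ell(\rho)=k$ forces $|\rho|\ge k$. So in a given degree the sum over $k$ is finite and substitution is legitimate. The Euler and $q$-binomial identities are used only as formal power series identities in $z$, so the resulting inversion is purely algebraic. As a sanity check, $k=0$ gives $\Theta_0=\sigma_1(0)=1$, and $k=1$ gives $\Theta_1=(\sigma_1((1-t)X)-1)/(1-t)$, consistent with $\Theta_1=\sum_r P_{(r)}(X;t)$.
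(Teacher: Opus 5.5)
Your proposal is correct and follows the same argument as the paper: you specialize \eqref{eq:Newton-Theta} at $u=t^j$ to get the triangular system with coefficients $(t;t)_j/(t;t)_{j-k}$, and then invert it by $t$-binomial inversion. The only difference is that you make the inversion explicit through the identity $\bigl(\sum_n z^n/(t;t)_n\bigr)\bigl(\sum_n(-1)^nt^{\binom n2}z^n/(t;t)_n\bigr)=1$, which is equivalent to the finite $t$-binomial theorem at $z=1$, and you add the degree-by-degree justification of the specialization that the paper leaves implicit.
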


\begin{proof}
Evaluate \eqref{eq:Newton-Theta} successively at
$u=1,t,t^2,\ldots$.  Since $(t^m;t^{-1})_k=0$ for $k>m$, this gives
a triangular system.  Its inverse is the usual finite
$t$-binomial inversion, which is exactly \eqref{eq:finite-Theta}.
\end{proof}

For example,
\begin{align*}
 \Theta_1(X;t)
 &=\frac{\sigma_1((1-t)X)-1}{1-t},\\
 \Theta_2(X;t)
 &=\frac{\sigma_1((1-t^2)X)
 -(1+t)\sigma_1((1-t)X)+t}
 {(1-t)(1-t^2)},\\
 \Theta_3(X;t)
 &=\frac{\sigma_1((1-t^3)X)
 -(1+t+t^2)\sigma_1((1-t^2)X)}
 {(1-t)(1-t^2)(1-t^3)}\\
 &\quad+
 \frac{t(1+t+t^2)\sigma_1((1-t)X)-t^3}
 {(1-t)(1-t^2)(1-t^3)}.
\end{align*}

Combining Propositions~\ref{prop:HL-reduction} and
\ref{prop:finite-Theta} gives
\begin{equation}\label{eq:finite-F}
  {
 \begin{aligned}
 \mathbf F_k(X)
 =\frac1{(t;t)_k}\Dop
 \sum_{j=0}^k&(-1)^{k-j}t^{\binom{k-j}{2}}
 \begin{bmatrix}k\\j\end{bmatrix}_t\\[-1mm]
 &\times\sigma_1\left(
 \frac{1-(1-q)t^j}{q(1-q)}X
 \right).
 \end{aligned} }
\end{equation}

\subsection{A generating series for all columns}

There is a more economical form of \eqref{eq:finite-Theta}.  Write
\[
 \sigma_1\left(-\frac{t^jX}{q}\right)
 =\sum_{n\geq0}h_n\left(-\frac Xq\right)t^{jn}.
\]
The finite $t$-binomial theorem gives
\begin{align*}
 &\sum_{j=0}^k(-1)^{k-j}t^{\binom{k-j}{2}}
 \begin{bmatrix}k\\j\end{bmatrix}_t t^{jn}
 =\prod_{r=0}^{k-1}(t^n-t^r).
\end{align*}
This vanishes for $n<k$, whereas for $n\geq k$ it equals
\[
 (-1)^kt^{\binom{k}{2}}
 \frac{(t;t)_n}{(t;t)_{n-k}}.
\]
Hence
\begin{equation}\label{eq:Theta-binomial}
  {
 \Theta_k\left(\frac Xq;t\right)
 =(-1)^kt^{\binom{k}{2}}
  \sigma_1\left(\frac Xq\right)
  \sum_{n\geq k}
  \begin{bmatrix}n\\k\end{bmatrix}_t
  h_n\left(-\frac Xq\right). }
\end{equation}
Since
\[
 h_n\left(-\frac Xq\right)=(-1)^nq^{-n}e_n(X),
\]
this may equivalently be written in the elementary basis.

\begin{proposition}[Master generating series]\label{prop:master}
One has
\begin{equation}\label{eq:master-Theta}
  {
 \sum_{k\geq0}z^k\Theta_k\left(\frac Xq;t\right)
 =\sigma_1\left(\frac Xq\right)
  \sum_{n\geq0}(z;t)_n
  h_n\left(-\frac Xq\right). }
\end{equation}
Consequently,
\begin{equation}\label{eq:master-F}
  {
 \sum_{k\geq0}z^k\mathbf F_k(X)
 =\Dop\left[
  \sigma_1\left(\frac{X}{q(1-q)}\right)
  \sum_{n\geq0}(z;t)_n
  h_n\left(-\frac Xq\right)
 \right]. }
\end{equation}
Equivalently,
\begin{equation}\label{eq:master-F-elementary}
  {
 \sum_{k\geq0}z^k\mathbf F_k(X)
 =\Dop\left[
  \sigma_1\left(\frac{X}{q(1-q)}\right)
  \sum_{n\geq0}(-1)^nq^{-n}(z;t)_n e_n(X)
 \right]. }
\end{equation}
\end{proposition}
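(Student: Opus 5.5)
The plan is to start from the closed form \eqref{eq:Theta-binomial} for each $\Theta_k(X/q;t)$, multiply by $z^k$, and sum over $k$. After exchanging the order of summation (legitimate in the degree completion, since for fixed $n$ only finitely many $k\le n$ contribute), this gives
\[
 \sum_{k\geq0}z^k\Theta_k\left(\frac Xq;t\right)
 =\sigma_1\left(\frac Xq\right)\sum_{n\geq0}h_n\left(-\frac Xq\right)
 \sum_{k=0}^n(-1)^kt^{\binom k2}\begin{bmatrix}n\\k\end{bmatrix}_t z^k .
\]
The inner finite sum is the finite $t$-binomial (Rothe) identity $\sum_{k=0}^n(-1)^kt^{\binom k2}\left[\begin{smallmatrix}n\\k\end{smallmatrix}\right]_t z^k=\prod_{r=0}^{n-1}(1-zt^r)=(z;t)_n$. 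This yields \eqref{eq:master-Theta}. As a sanity check, at $z=1$ only $n=0$ survives, and this recovers \eqref{eq:sum-all-Theta} with $Z=X/q$.

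For \eqref{eq:master-F}, I would use Proposition~\ref{prop:HL-reduction}. It gives $\mathbf F_k=\Dop[\sigma_1(X/(1-q))\Theta_k(X/q;t)]$, and $\Dop$ is linear and degree-preserving, so summing $z^k\mathbf F_k$ commutes with $\Dop$ coefficientwise in each degree. Multiplying \eqref{eq:master-Theta} by $\sigma_1(X/(1-q))$ and using additivity of $\sigma_1$ on alphabets,
\[
 \frac{1}{1-q}+\frac1q=\frac{1}{q(1-q)},
\]
produces the factor $\sigma_1\bigl(X/(q(1-q))\bigr)$. Finally, \eqref{eq:master-F-elementary} follows by substituting $h_n(-X/q)=(-1)^nq^{-n}e_n(X)$, which is the standard plethystic negation rule together with homogeneity.

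The only delicate point is convergence and bookkeeping. One must check that \eqref{eq:Theta-binomial} is valid for every $k$ including $k=0$, where it reads $\sigma_1(X/q)\sigma_1(-X/q)=1$. One must also confirm the sign and $t$-power conventions in Rothe's identity, i.e. that the factor $(-1)^kt^{\binom k2}$ matches $\prod_{r<n}(1-zt^r)$ rather than $\prod(z-t^r)$. I would verify this directly for $n=1,2$ before invoking the general identity.
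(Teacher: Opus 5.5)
Your proposal is correct and follows the paper's own proof. You multiply \eqref{eq:Theta-binomial} by $z^k$, sum over $k$, and collapse the inner sum to $(z;t)_n$ with the finite $t$-binomial theorem. You then multiply by $\sigma_1(X/(1-q))$ via Proposition~\ref{prop:HL-reduction}, merge alphabets using $1/(1-q)+1/q=1/(q(1-q))$, and rewrite $h_n(-X/q)=(-1)^nq^{-n}e_n(X)$. Your extra checks are sound bookkeeping that the paper leaves implicit: the interchange of summations in the degree completion, the case $k=0$, and the specialization $z=1$ recovering \eqref{eq:sum-all-Theta}.
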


\begin{proof}
Multiply \eqref{eq:Theta-binomial} by $z^k$ and sum over $k$.
The finite $t$-binomial theorem gives
\[
 \sum_{k=0}^n(-1)^kt^{\binom{k}{2}}
 \begin{bmatrix}n\\k\end{bmatrix}_t z^k=(z;t)_n.
\]
This proves \eqref{eq:master-Theta}.  Formula \eqref{eq:master-F}
then follows from \eqref{eq:F-Theta} and
\[
 \sigma_1\left(\frac{X}{1-q}\right)
 \sigma_1\left(\frac Xq\right)
 =\sigma_1\left(\frac{X}{q(1-q)}\right).
\]
\end{proof}

\subsection{Nazarov--Sklyanin eigenvalues}
We can now identify exactly the shifted functions occurring in the
Nazarov--Sklyanin spectrum.  Write
\[
 e_k^*(x;T):=P_{(1^k)}^*(x;Q,T).
\]
As is clear either from the tableau formula or from the following
generating series, this function is independent of the first
Macdonald parameter $Q$.

\begin{proposition}\label{prop:NS-shifted-column}
The eigenvalue of $A^{(k)}$ on $P_\lambda(X;q,t)$ is
\begin{equation}\label{eq:NS-shifted-column}
  {
 a_k(\lambda)
 =t^{-\binom{k}{2}}
  P_{(1^k)}^*\left(q^{-\lambda};q^{-1},t^{-1}\right). }
\end{equation}
Consequently,
\begin{equation}\label{eq:F-shifted-column}
  {
 \frac{\Fcal_k^{[n]}}{n!}
 =t^{-\binom{k}{2}}
  \sum_{\lambda\vdash n}
  \frac{P_{(1^k)}^*(q^{-\lambda};q^{-1},t^{-1})}
       {c'_\lambda(q,t)}P_\lambda(X;q,t). }
\end{equation}
\end{proposition}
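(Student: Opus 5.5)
We would prove \eqref{eq:NS-shifted-column} through the interpolation characterization of Okounkov's polynomials, applied at the inverted parameters $Q=q^{-1}$, $T=t^{-1}$. Put $x_i=q^{-\lambda_i}=Q^{\lambda_i}$. Then $P_{(1^k)}^*(x;Q,T)$ is, up to a scalar, the unique function with three properties: it is shifted symmetric, i.e.\ symmetric in the variables $x_iT^{1-i}$ after the usual stabilization; it has degree at most $k$; and it vanishes at $x=Q^\mu$ for every $\mu$ with $|\mu|\le k$, $\mu\neq(1^k)$. The scalar is then fixed by one evaluation or one leading coefficient. The plan is to check these properties for $\lambda\mapsto a_k(\lambda)$, read as a function of $x$, and then match normalizations.

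\emph{Step 1 (shifted symmetry and degree).} Factor each term of the product in \eqref{eq:eigenvalue-definition} as
\[
\frac{x_i-ut^{1-i}}{1-ut^{1-i}}=\frac{x_it^{i-1}-u}{t^{i-1}-u},
\qquad x_it^{i-1}=x_iT^{1-i}.
\]
The numerator $\prod_i(y_i-u)$ with $y_i=x_iT^{1-i}$ is symmetric in the $y_i$. The product also stabilizes, since the factors with $\lambda_i=0$ equal $1$. We will expand the numerator in elementary functions of the $y_i$. The coefficient of $1/(u;t^{-1})_k$ is extracted by the same triangular inversion as in Proposition~\ref{prop:finite-Theta}, i.e.\ by evaluating at $u=1,t^{-1},t^{-2},\dots$. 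This shows that $a_k$ is a polynomial in the $y_i$ of degree at most $k$, and symmetric, hence shifted symmetric in the required sense. To make the inversion rigorous we may work first with $N$ variables, where $(u;t^{-1})_k$ for $k\le N$ spans the relevant polynomials in $u$, and then let $N\to\infty$.

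\emph{Step 2 (vanishing).} We already know that $a_k(\lambda)=0$ whenever $\ell(\lambda)<k$. If $|\mu|\le k$ and $\mu\ne(1^k)$, then necessarily $\ell(\mu)<k$. So $a_k$ vanishes at exactly the interpolation nodes demanded by the characterization. Uniqueness then gives $a_k(\lambda)=c_k\,P_{(1^k)}^*(q^{-\lambda};q^{-1},t^{-1})$ for some constant $c_k$. In passing this also confirms that the function is independent of $Q$.

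\emph{Step 3 (normalization), the main obstacle.} To compute $c_k$ we evaluate at $\lambda=(1^k)$. On the left, the product becomes
\[
\prod_{i=1}^k\frac{q^{-1}-ut^{1-i}}{1-ut^{1-i}}.
\]
Its coefficient on $1/(u;t^{-1})_k$ is obtained by clearing denominators and reading off the constant term of the numerator, which gives an explicit product of factors $(q^{-1}-t^{1-i})$. On the right we need Okounkov's diagonal value $P_\mu^*(Q^\mu;Q,T)$ at $\mu=(1^k)$, in the same normalization used for $P_\mu^*(\mu)/c'_\mu=(-1)^{|\mu|}q^{n(\mu')}t^{-2n(\mu)}$, but with $q,t$ inverted. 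The bookkeeping of powers of $t$ here is delicate, and it is exactly what should produce the factor $t^{-\binom k2}$; it is the step most prone to error. As a cross-check we would compare top-degree terms: the top term of $a_k$ is $e_k(y)$ up to sign, and the top term of $P^*_{(1^k)}$ is $e_k(x)$, which fixes the same constant.

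\emph{Step 4.} Formula \eqref{eq:F-shifted-column} then follows immediately by substituting \eqref{eq:NS-shifted-column} into \eqref{eq:spectral-reconstruction-P}.
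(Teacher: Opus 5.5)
Your route differs from the paper's. The paper substitutes $T=t^{-1}$, $x_i=q^{-\lambda_i}$, $v=-u$ into Okounkov's generating series for the shifted columns $e_k^*$, and the factor $t^{-\binom k2}$ comes from $\prod_{r=0}^{k-1}(v+T^{-r})=t^{\binom k2}(u;t^{-1})_k$. You instead invoke interpolation uniqueness, which avoids that generating series. Your Steps 1--2 are essentially sound and give $a_k=c_k\,P^*_{(1^k)}(q^{-\lambda};q^{-1},t^{-1})$ for some constant $c_k$. Two small repairs are needed there:
\begin{itemize}
\item The zeros of $(u;t^{-1})_k$ are $u=1,t,\dots,t^{k-1}$, not $u=t^{-1},t^{-2},\dots$. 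These points are poles of the product, so you must first multiply through by $(u;t^{-1})_N$.
\item The bound $\deg a_k\le k$ is clearest from the expansion at $u=\infty$, where $1/(u;t^{-1})_k=(-1)^kt^{\binom k2}u^{-k}(1+O(u^{-1}))$.
\end{itemize}

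The gap is Step 3, which carries all the content of the statement beyond proportionality. You leave it uncomputed, and the extraction rule you describe does not work. At $\lambda=(1^k)$, clearing denominators gives
\[
 \prod_{i=1}^k\bigl(q^{-1}-ut^{1-i}\bigr)=\sum_{j=0}^k a_j\bigl((1^k)\bigr)\prod_{r=j}^{k-1}\bigl(1-ut^{-r}\bigr).
\]
The constant term is therefore $q^{-k}=\sum_j a_j((1^k))$, not $a_k((1^k))$, and it is not a product of factors $(q^{-1}-t^{1-i})$. To isolate $a_k$, evaluate at $u=t^{k-1}$; this kills every $j<k$ and gives $a_k((1^k))=\prod_{r=0}^{k-1}(q^{-1}-t^r)$. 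On the other side, the diagonal formula at inverted parameters (or the one-tableau column formula) gives $P^*_{(1^k)}(q^{-(1^k)};q^{-1},t^{-1})=t^{\binom k2}\prod_{r=0}^{k-1}(q^{-1}-t^r)$. Hence $c_k=t^{-\binom k2}$.

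Your proposed top-degree cross-check also misstates both leading terms:
\begin{itemize}
\item The top component of $a_k$ is $t^{-\binom k2}e_k(y)$, with $y_i=x_it^{i-1}$. There is no sign, and the power of $t$ is exactly the factor you are after; it comes from the expansion at $u=\infty$ above.
\item In Okounkov's normalization the top component of $P^*_{(1^k)}(x;Q,T)$ is $e_k(x_1,x_2T^{-1},x_3T^{-2},\dots)=e_k(y)$, not $e_k(x)$. Indeed $e_k(x)$ cannot be the top term of a shifted-symmetric function.
\end{itemize}
Once both terms are corrected, the top-degree comparison gives $c_k=t^{-\binom k2}$ in one line and is the cleanest way to finish. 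Step 4 is then fine.
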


\begin{proof}
Okounkov's generating series for the shifted elementary functions is
\begin{equation}\label{eq:Okounkov-column-generating}
 \prod_{i\geq1}
 \frac{1+x_iT^{1-i}/v}{1+T^{1-i}/v}
 =\sum_{k\geq0}
 \frac{e_k^*(x;T)}
 {(v+1)(v+T^{-1})\cdots(v+T^{1-k})}.
\end{equation}
Set $T=t^{-1}$, $x_i=q^{-\lambda_i}$ and $v=-u$.  The
left-hand side becomes
\[
 \prod_{i\geq1}
 \frac{1-q^{-\lambda_i}t^{i-1}/u}
      {1-t^{i-1}/u}
 =\prod_{i\geq1}
 \frac{q^{-\lambda_i}-ut^{1-i}}
      {1-ut^{1-i}},
\]
which is the eigenvalue in \eqref{eq:NS-spectrum}.  On the other
hand,
\[
 \prod_{r=0}^{k-1}(v+T^{-r})
 =\prod_{r=0}^{k-1}(t^r-u)
 =t^{\binom{k}{2}}(u;t^{-1})_k.
\]
Comparison with \eqref{eq:eigenvalue-definition} proves
\eqref{eq:NS-shifted-column}; the spectral reconstruction formula
then gives \eqref{eq:F-shifted-column}.
\end{proof}

Thus the operators at infinity are precisely the diagonal operators
associated with the shifted columns at inverted parameters, up to the
universal factor $t^{-\binom{k}{2}}$.  The master series
\eqref{eq:master-F} is the corresponding closed expression in the
ordinary Macdonald basis with parameters $(q,t)$; the inversion of
the parameters cannot simply be suppressed because the spectral
weight remains $1/c'_\lambda(q,t)$.

\subsection{Initial terms and interpretation}

The unique partition of size and length $k$ is $(1^k)$.  Thus
\[
 [\Theta_k(X/q;t)]_k
 =q^{-k}t^{\binom{k}{2}}e_k(X).
\]
Since $\Dop e_k=t^{-\binom{k}{2}}e_k$, we recover
\begin{equation}\label{eq:leading-term}
  {\Fcal_k^{[k]}=k!q^{-k}e_k.}
\end{equation}
The next component is
\begin{equation}\label{eq:first-correction}
 \Fcal_k^{[k+1]}
 =\frac{(k+1)!}{q^{2k}(1-q)}
   \mathsf D_q(p_1e_k),
\end{equation}
where, on the partitions occurring in this Pieri product,
\[
 \mathsf D_qP_\lambda=q^{|\lambda|-\lambda_1}P_\lambda.
\]
Formula \eqref{eq:master-F} explains why the higher corrections are
not obtained by multiplying $e_k$ by an ordinary symmetric series:
already the following components contain terms $p_3,p_4,\ldots$.
They arise automatically from the second factor in
\eqref{eq:master-F}, rather than from an ad hoc multiplicative
correction.

Formula \eqref{eq:NS-shifted-column} now gives an intrinsic shifted
interpretation of $\Fcal_k$: it represents the interpolation column
$P_{(1^k)}^*$ at inverted parameters, multiplied by
$t^{-\binom{k}{2}}$.  We still refrain from denoting this series by
$\widehat J_{1^k}$, since that notation would also require a chosen
normalization for arbitrary partitions.  Independently of notation,
$\Fcal_k$ is the unique completed series whose multiplication for the
product dual to \eqref{eq:diagonal-coproduct} realizes $A^{(k)}$.

\section{Examples}

Low-degree calculations are immediate from the definition.  In degree
one,
\begin{equation}\label{eq:p1-product}
 p_1\times_{q,t}p_1=(1-q)p_1.
\end{equation}
For degree two it is most economical to give the answers in the
orthogonal $J$-basis.  With $p_{11}=p_1^2$, one obtains
\begin{align}
 p_2\times_{q,t}p_2
 &=A_2J_{(2)}+B_{22}J_{(1,1)},
 \label{eq:p2-p2-product}\\
 p_{11}\times_{q,t}p_2
 &=A_2\bigl(J_{(2)}-J_{(1,1)}\bigr),
 \label{eq:p11-p2-product}\\
 p_{11}\times_{q,t}p_{11}
 &=A_2J_{(2)}+B_{11}J_{(1,1)},
 \label{eq:p11-p11-product}
\end{align}
where
\begin{align*}
 A_2&=\frac{(1-q)(1-q^2)}{2(1-t)(1-qt)},\\
 B_{22}&=\frac{(1-q^2)(1+q)}{2(1+t)(1-qt)},\\
 B_{11}&=\frac{(1-q)^3(1+t)}{2(1-t)^2(1-qt)}.
\end{align*}
For example, these identities follow by inserting
\begin{align*}
 J_{(2)}
 &=\frac{(1-t)^2(1+q)}2p_{11}
   +\frac{(1-q)(1-t^2)}2p_2,\\
 J_{(1,1)}
 &=\frac{(1-t)(1-t^2)}2(p_{11}-p_2)
\end{align*}
into \eqref{eq:diagonal-product}.

 For completeness, put
$K=(1-q)(1-q^2)$.  The four scalar products and the two norms needed
for the calculation are
\[
\begin{array}{c|cc}
 &J_{(2)}&J_{(1,1)}\\ \hline
 \langle p_2,\mathord\cdot\rangle_{q,t}
   &K&-(1-q^2)(1-t)\\
 \langle p_{11},\mathord\cdot\rangle_{q,t}
   &K&(1-q)^2(1+t)
\end{array}
\]
and
\begin{align*}
 j_{(2)}&=(1-t)(1-qt)(1-q)(1-q^2),\\
 j_{(1,1)}&=(1-t)(1-t^2)(1-q)(1-qt).
\end{align*}
Substitution in
\[
 F\times_{q,t}G
 =\sum_{\lambda\vdash2}
   \frac{\langle F,J_\lambda\rangle_{q,t}
         \langle G,J_\lambda\rangle_{q,t}}
        {2j_\lambda(q,t)}J_\lambda
\]
then verifies every coefficient in \eqref{eq:p2-p2-product}--
\eqref{eq:p11-p11-product} directly.

The stable example already displays the new phenomenon. 

By the spectral characterization of the Macdonald hat transform,
in every degree $n\geq2$,
\begin{equation}\label{eq:hat-p2-square-spectral}
  {
 \widehat p_2^{[n]}\times_{q,t}\widehat p_2^{[n]}
 =n!\sum_{\lambda\vdash n}
   \frac{p_2^*(\lambda)^2}{j_\lambda(q,t)}J_\lambda. }
\end{equation}
Thus this square is the stable representative of the pointwise square
$(p_2^*)^2$.  For $n=2$, formula (32) of
\cite{BenDaliDAdderio}, or a direct expansion of the two integral
Macdonald polynomials, gives
\begin{align*}
 p_2^*((2))
 &=q(1-q)(1-q^2),\\
 p_2^*((1,1))
 &=-t^{-2}(1-t)(1-q^2),
\end{align*}
The factors $q$ and $t^{-2}$ in these two evaluations are important.
Indeed, comparison with
%%%
\[
 p_2=
 \frac{1}{(1-t)(1-qt)}J_{(2)}
 -\frac{1+q}{(1-t^2)(1-qt)}J_{(1,1)}
\]
gives
\[
  {
 \widehat p_2^{[2]}
 =\frac{2q}{(1-t)(1-qt)}J_{(2)}
 -\frac{2t^{-2}(1+q)}{(1-t^2)(1-qt)}J_{(1,1)}. }
\]
%%%
Equivalently, for the exponentially normalized series introduced
below,
\begin{equation}\label{eq:p2-minimal-component-H}
  {
 [\mathbf{\widehat p}_2]_2
 =\frac{\widehat p_2^{[2]}}{2!}
 =\mathsf H_{q,t}(p_2)
 =\frac{q}{(1-t)(1-qt)}J_{(2)}
  -\frac{t^{-2}(1+q)}{(1-t^2)(1-qt)}J_{(1,1)}. }
\end{equation}
%%%
Thus $\widehat p_2^{[2]}$ is not a scalar multiple of $p_2$.
This is not a defect of the product normalization: it is the diagonal
twist inherent in Okounkov's normalization of interpolation Macdonald
polynomials.  More generally, as will follow from
\eqref{eq:shifted-diagonal-evaluation-comparison},
\begin{equation}\label{eq:hat-leading-component}
 \widehat P_\mu^{[|\mu|]}
 =|\mu|!(-1)^{|\mu|}q^{n(\mu')}t^{-2n(\mu)}P_\mu.
\end{equation}
This convention is retained throughout because it agrees directly with
the Macdonald characters of Ben Dali and D'Adderio.  Squaring the two
spectral coefficients now gives
\begin{align}
 \widehat p_2^{[2]}\times_{q,t}\widehat p_2^{[2]}
 ={}&\frac{2q^2(1-q)(1-q^2)}{(1-t)(1-qt)}J_{(2)}
 \notag\\
 &+\frac{2t^{-4}(1-q^2)(1+q)}{(1+t)(1-qt)}J_{(1,1)}.
 \label{eq:hat-p2-square-degree-two}
\end{align}
Formula \eqref{eq:hat-p2-square-degree-two} is only the degree-two
component of a finite stable identity.  Put
\begin{align*}
 C_3&=\frac{(q-1)(q+1)^2(t^2+t+1)}{3t^2},\\
 C_{21}&=\frac{(q-1)(q+1)^2(t-1)(t+1)}{2t^2},\\
 C_{111}&=\frac{(q-1)(q+1)^2(t-1)^2}{6t^2},\\
 C_{11}&=\frac{(q-1)(q+1)^2(t-1)(qt-t+1)}{2t^2},\\
 C_2&=\frac{(q-1)(q+1)}{2t^2}
 \bigl(q^2t^2+q^2t-2qt^2-qt+q+t^2-2t+1\bigr).
\end{align*}
Then
\begin{equation}\label{eq:hat-p2-square-stable-Macdonald}
  {
 \widehat p_2\times_{q,t}\widehat p_2
 =\widehat p_{22}+C_3\widehat p_3
  +C_{21}\widehat p_{21}+C_{111}\widehat p_{111}
  +C_2\widehat p_2+C_{11}\widehat p_{11}. }
\end{equation}
Here and below, an identity between completed series means the
componentwise identity in every homogeneous degree.

To verify \eqref{eq:hat-p2-square-stable-Macdonald}, the spectral
characterization reduces it to
\begin{equation}\label{eq:p2-star-square}
 (p_2^*)^2=p_{22}^*+C_3p_3^*+C_{21}p_{21}^*
 +C_{111}p_{111}^*+C_2p_2^*+C_{11}p_{11}^*.
\end{equation}
Indeed, the left-hand side has shifted degree four and top homogeneous
component $p_{22}$.  After subtracting $p_{22}^*$, only characters of
sizes two and three can occur.  Evaluation on $(2)$ and $(1,1)$ first
determines $C_2,C_{11}$.  Evaluation on $(3),(2,1),(1,1,1)$ then
determines $C_3,C_{21},C_{111}$.  The required values in equal size
are obtained directly from
\[
 (-1)^{|\lambda|}q^{n(\lambda')}t^{-2n(\lambda)}J_\lambda
 =\sum_{\mu\vdash|\lambda|}
   \frac{p_\mu^*(\lambda)}{z_\mu(q,t)}p_\mu.
\]
Substitution gives the five coefficients displayed above.  The
difference between the two sides of \eqref{eq:p2-star-square} has
shifted degree at most four, vanishes on every diagram of size less
than four, and has zero top homogeneous component.  The
characterization theorem for Macdonald characters therefore makes it
identically zero.

Thus finite stability survives for generic $(q,t)$.  What changes from
the classical and Jack cases is not finiteness, but the explicit form
of the representatives: their homogeneous components contain the
nonmultiplicative diagonal twist visible already in
\eqref{eq:hat-leading-component}.

Let us check explicitly that \eqref{eq:hat-p2-square-stable-Macdonald}
degenerates to \eqref{eq:intro-Jack-square}.  Set
\begin{equation}\label{eq:Jack-degeneration-parameters}
 t=1+\gamma,\qquad q=t^\alpha=(1+\gamma)^\alpha,
 \qquad \gamma\longrightarrow0.
\end{equation}
The normalization compatible with the stable Jack characters used in
the introduction is
\begin{equation}\label{eq:Macdonald-to-Jack-character-normalization}
 \operatorname{Ch}^{(\alpha)}_\mu
 =\lim_{\gamma\to0}
  \alpha^{-\ell(\mu)}\gamma^{-|\mu|}p_\mu^*.
\end{equation}
The factor $\alpha^{-\ell(\mu)}$ is essential.  Indeed,
$z_\mu(q,t)\to z_\mu\alpha^{\ell(\mu)}$, so that
\eqref{eq:Macdonald-to-Jack-character-normalization} is $z_\mu$ times
the Jack character in the normalization of Ben Dali and D'Adderio.

After dividing \eqref{eq:p2-star-square} by
$\alpha^2\gamma^4$, the effective coefficient of the character
indexed by $\mu$ is
\[
 \alpha^{\ell(\mu)-2}\gamma^{|\mu|-4}C_\mu,
\]
where $C_{22}=1$.  Direct expansion gives
\begin{align*}
 \lim_{\gamma\to0}\alpha^{-1}\gamma^{-1}C_3&=4,
 &\lim_{\gamma\to0}\gamma^{-1}C_{21}&=0,
 &\lim_{\gamma\to0}\alpha\gamma^{-1}C_{111}&=0,\\
 \lim_{\gamma\to0}\alpha^{-1}\gamma^{-2}C_2&=2(\alpha-1),
 &\lim_{\gamma\to0}\gamma^{-2}C_{11}&=2\alpha.
\end{align*}
Consequently \eqref{eq:hat-p2-square-stable-Macdonald} tends exactly
to
\[
 \widehat p_2\times_\alpha\widehat p_2
 =\widehat p_{22}+4\widehat p_3
  +2\alpha\widehat p_{11}+2(\alpha-1)\widehat p_2,
\]
as asserted in \eqref{eq:intro-Jack-square}.  In particular, the two
additional Macdonald terms indexed by $(2,1)$ and $(1,1,1)$ disappear
in the Jack degeneration.

One may alternatively use the linear path $t=1+\gamma$,
$q=1+\alpha\gamma$.  It gives the same five limits: the calculation
above uses the full expansion of $(1+\gamma)^\alpha$ and therefore
checks the standard degeneration $q=t^\alpha$, not merely its
linearization.
For comparison, the renormalized transform announced in the
introduction has the particularly simple lowest component
\begin{equation}\label{eq:check-p2-minimal-component}
  {
 \check p_2^{[2]}=2p_2
 =\frac{2}{(1-t)(1-qt)}J_{(2)}
  -\frac{2(1+q)}{(1-t^2)(1-qt)}J_{(1,1)}. }
\end{equation}
Unlike \eqref{eq:p2-minimal-component-H}, this formula contains no
diagonal monomial twist.
Its product in the smallest homogeneous degree is consequently
\begin{equation}\label{eq:check-p2-square-degree-two}
  {
 \begin{aligned}
 \check p_2^{[2]}\times_{q,t}\check p_2^{[2]}
 ={}&\frac{2(1-q)(1-q^2)}{(1-t)(1-qt)}J_{(2)}\\
 &+\frac{2(1-q^2)(1+q)}{(1+t)(1-qt)}J_{(1,1)}.
 \end{aligned} }
\end{equation}
This is the degree-two component of the stable product
$\check p_2\times_{q,t}\check p_2$.  The complete expansion in the
check power-sum basis is finite, but, unlike
\eqref{eq:hat-p2-square-stable-Macdonald},
its top filtered component already involves all five partitions of
four.  Indeed it is obtained by expanding
\[
 \mathsf H_{q,t}\bigl((\mathsf H_{q,t}^{-1}p_2)^2\bigr)
\]
in the power-sum basis.  Since $\mathsf H_{q,t}$ is not multiplicative,
the resulting rational coefficients are substantially less compact
than those in \eqref{eq:hat-p2-square-stable-Macdonald}; we therefore retain
\eqref{eq:check-p2-square-degree-two} as the readable first example.

\subsection{The first nontrivial character in content form}

The eigenvalue corresponding to $\widehat p_2$ admits a particularly
simple expression.  For a cell $s=(i,j)$, put
\begin{equation}\label{eq:qt-content}
 \chi_{q,t}(s):=q^{a'(s)}t^{-l'(s)}=q^{j-1}t^{1-i},
\end{equation}
and define the content power sums
\begin{equation}\label{eq:content-power-sums}
 B_r(\lambda):=\sum_{s\in\lambda}\chi_{q,t}(s)^r.
\end{equation}

\begin{proposition}\label{prop:p2-content-eigenvalue}
For every partition $\lambda$,
\begin{equation}\label{eq:p2-content-eigenvalue}
  {
 p_2^*(\lambda)
 =(1-q^2)\sum_{s\in\lambda}
  \chi_{q,t}(s)\bigl(1-\chi_{q,t}(s)\bigr)
 =(1-q^2)\bigl(B_1(\lambda)-B_2(\lambda)\bigr). }
\end{equation}
Consequently the multiplication operator associated with
$\widehat p_2$ satisfies
\begin{equation}\label{eq:hat-p2-content-action}
 \widehat p_2^{[n]}\times_{q,t}Q_\lambda
 =(1-q^2)\bigl(B_1(\lambda)-B_2(\lambda)\bigr)Q_\lambda,
 \qquad \lambda\vdash n.
\end{equation}
\end{proposition}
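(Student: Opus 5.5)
Since $p_2^*$ is defined by linearity from the expansion $p_2=\sum_\mu f_\mu P_\mu$ in degree two, I would first write $p_2$ in the $P$-basis and then use explicit formulas for $P_{(2)}^*$ and $P_{(1,1)}^*$. From the expansion of $p_2$ in the $J$-basis displayed in the Examples section, together with $J_\lambda=c_\lambda P_\lambda$, we get $p_2=a\,P_{(2)}+b\,P_{(1,1)}$ with explicit rational coefficients $a,b$. Hence $p_2^*=aP_{(2)}^*+bP_{(1,1)}^*$, and it is enough to evaluate the two interpolation polynomials at $x_i=q^{\lambda_i}$.

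For these evaluations I would use Okounkov's combinatorial (tableau) formula for $P_\mu^*$ in the normalization fixed earlier. The column case is already available: the generating series \eqref{eq:Okounkov-column-generating} with $k=2$ gives $e_2^*$, hence $P_{(1,1)}^*(q^\lambda)$, as a symmetric expression in the shifted variables $x_it^{1-i}$. For $P_{(2)}^*$ I would use the one-row tableau formula, or equivalently the relation $h$-versus-$e$ at level two: $P_{(2)}^*$ is determined by vanishing at $\varnothing,(1),(1,1)$, having top component $P_{(2)}$, and having the prescribed normalization. Rather than computing these two separately, it is more efficient to work directly with the candidate. Let
\[
G(\lambda):=(1-q^2)\bigl(B_1(\lambda)-B_2(\lambda)\bigr).
\]
I would verify that $G$ has the four properties that characterize $p_2^*$ among shifted symmetric functions of degree at most two.

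First, $G$ is shifted symmetric of degree at most two. Summing geometric series row by row gives
\[
B_r(\lambda)=\sum_i t^{r(1-i)}\frac{1-q^{r\lambda_i}}{1-q^r}.
\]
The difference of this expression and its value at $\lambda=\varnothing$ is a symmetric polynomial of degree $r$ in the variables $x_i t^{1-i}$ with $x_i=q^{\lambda_i}$. So $G$ is a combination of $p_1$- and $p_2$-type shifted power sums, with top component proportional to $p_2(x_it^{1-i})$.

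Second, $G$ vanishes at $\varnothing$ and at $(1)$. The only cell of $(1)$ has $\chi=1$, so $G((1))=0$.

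Third, the values at size two agree with those displayed earlier. For $(2)$ the cells have contents $1$ and $q$, so $G((2))=(1-q^2)q(1-q)$, which matches $p_2^*((2))$. For $(1,1)$ the contents are $1$ and $t^{-1}$, so $G((1,1))=(1-q^2)t^{-1}(1-t^{-1})=-t^{-2}(1-t)(1-q^2)$, which also matches.

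The characterization invoked at the end of the verification of \eqref{eq:p2-star-square} then applies. The difference $p_2^*-G$ is a shifted symmetric function of degree at most two that vanishes on all diagrams of size at most two, so it must be zero.

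The main obstacle is making sure the top homogeneous components agree, not merely that the degrees are correct. Vanishing on diagrams of size at most two forces a degree-two shifted function to be a combination $\alpha P_{(2)}^*+\beta P_{(1,1)}^*$. Matching the values at $(2)$ and $(1,1)$ then fixes $\alpha$ and $\beta$, because $P_{(1,1)}^*((2))=0$ makes the evaluation system triangular. So the vanishing argument already suffices, and checking the top component is only a consistency test.

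Finally, \eqref{eq:hat-p2-content-action} follows immediately from Proposition~\ref{prop:hat-characterization} with $f=p_2$.
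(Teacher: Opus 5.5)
Your proof is correct. It follows the paper's skeleton: rewrite $(1-q^2)(B_1-B_2)$ row by row as a stable function of degree two, symmetric in $q^{\lambda_i}t^{1-i}$; check that it vanishes on $\varnothing$ and $(1)$; conclude by uniqueness. Where you differ is in the datum used to pin the function down.

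\textbf{The paper's route.} It computes the top homogeneous component $p_2(v_1,t^{-1}v_2,\ldots)$, which is also the top component of $p_2^*$. It then quotes the Ben Dali--D'Adderio characterization: degree $|\mu|$, top component $p_\mu$, vanishing below size $|\mu|$.

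\textbf{Your route.} You match the two values at $(2)$ and $(1,1)$. You then use only the triangularity of the interpolation basis ($P_\mu^*(\lambda)=0$ unless $\mu\subseteq\lambda$, and $P_\mu^*(\mu)\neq0$). This says that a shifted symmetric function of degree at most $d$ vanishing on all diagrams of size at most $d$ is zero.

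\textbf{Comparison.} These are two faces of the same principle; the BDD characterization is itself an immediate consequence of that triangularity.
\begin{itemize}
\item Your route does not depend on the BDD theorem. The price is that it rests on the explicit values $p_2^*((2))=q(1-q)(1-q^2)$ and $p_2^*((1,1))=-t^{-2}(1-t)(1-q^2)$. These in turn depend on the diagonal evaluations $P_\mu^*(\mu)$ and on the $P$-expansion of $p_2$.
\item The paper's top-component check uses only the defining normalization of $P_\mu^*$, namely that its top component is $P_\mu(x_1,t^{-1}x_2,\ldots)$. It also scales to larger $\mu$, where evaluating at every diagram of size $|\mu|$ would be laborious.
\end{itemize}

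\textbf{One slip.} In your last paragraph, ``vanishing on diagrams of size at most two'' should read ``of size less than two,'' since vanishing on all diagrams of size at most two would already force the function to be zero. Also, the evaluation system for $\alpha,\beta$ is actually diagonal, not just triangular, because $P_{(2)}^*((1,1))=0$ as well.
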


\begin{proof}
Write $v_i=q^{\lambda_i}$.  Summing first along the rows of $\lambda$
shows that the right-hand side of \eqref{eq:p2-content-eigenvalue} is
\begin{equation}\label{eq:p2-shifted-variables}
 \sum_{i\geq1}\left\{
 (1+q)t^{1-i}(1-v_i)
 -t^{2(1-i)}(1-v_i^2)\right\}.
\end{equation}
This expression is symmetric in the shifted variables
$v_i t^{1-i}$, is compatible with adjoining a last variable equal to
$1$, and has degree two.  Its top homogeneous component is
\[
 \sum_{i\geq1}t^{2(1-i)}v_i^2
 =p_2(v_1,t^{-1}v_2,t^{-2}v_3,\ldots).
\]
It vanishes on the two diagrams of size strictly less than two,
$\varnothing$ and $(1)$.  The characterization theorem for Macdonald
characters in \cite{BenDaliDAdderio} therefore identifies
\eqref{eq:p2-shifted-variables} with the Macdonald character
$\widetilde\theta_{(2)}^{(q,t)}=p_2^*$.
Equation \eqref{eq:hat-p2-content-action} then follows from
Proposition~\ref{prop:hat-characterization}.
\end{proof}

There is also a concise operator formulation.  Let
$\boldsymbol\Delta_f$ denote the integral Macdonald Delta operator
defined by
\begin{equation}\label{eq:integral-Delta-f}
 \boldsymbol\Delta_fQ_\lambda
 =f\bigl[\{\chi_{q,t}(s):s\in\lambda\}\bigr]Q_\lambda.
\end{equation}
Then the multiplication operator by $\widehat p_2$ is
\begin{equation}\label{eq:hat-p2-Delta-operator}
  {
 \Delta_{\widehat p_2}^{(q,t)}
 =(1-q^2)
  \left(\boldsymbol\Delta_{p_1}-\boldsymbol\Delta_{p_2}\right)
 =(1-q^2)\left(
  \boldsymbol\Delta_{e_1}
  -(\boldsymbol\Delta_{e_1})^{,2}
  +2\boldsymbol\Delta_{e_2}\right). }
\end{equation}
Equivalently, if
\begin{equation}\label{eq:integral-Delta-generating}
 \boldsymbol\Delta_vQ_\lambda
 =\prod_{s\in\lambda}(1-v\chi_{q,t}(s))Q_\lambda,
\end{equation}
then $B_r=-r[v^r]\log\boldsymbol\Delta_v$ and
\eqref{eq:hat-p2-Delta-operator} is obtained from the first two
coefficients of $\log\boldsymbol\Delta_v$.

Formula \eqref{eq:hat-p2-Delta-operator} is more natural than an
expansion in the differential blocks
\[
 (I\mid J)=\prod_{i\in I}p_i\prod_{j\in J}D_{p_j}.
\]
For generic $(q,t)$, the Macdonald Delta operators are difference, or
vertex, operators and their normally ordered block expansions contain
terms of unbounded differential order.  The finite cut-and-join
operator reappears only after taking the Jack degeneration with the
appropriate normalization.  Thus a block formula exists as a formal
infinite expansion, but \eqref{eq:p2-content-eigenvalue} and
\eqref{eq:hat-p2-Delta-operator} retain considerably more of the
structure.

\section{Comparison with Macdonald characters and Theta operators}

We now compare the preceding construction with that of Ben Dali and
D'Adderio~\cite{BenDaliDAdderio}.  The comparison 
requires some care with notation.  In the literature on Theta
operators one sets
\[
 M=(1-q)(1-t),\qquad f^{\mathrm{pl}}[X]:=f[X/M].
\]
The superscript usually denoted by a star in
\cite{DAdderioIraciVandenWyngaerd,DAdderioRomero} means this
\emph{plethystic substitution}; it is unrelated to Okounkov's shifted
polynomial $f^*$.  We use the superscript ${\rm pl}$ in this section
to prevent this collision.

Let $\widetilde H_\lambda$ be the modified Macdonald basis, and set
\begin{equation}\label{eq:Pi-operator}
 \mathbf\Pi\widetilde H_\lambda
 =\Pi_\lambda\widetilde H_\lambda,
 \qquad
 \Pi_\lambda=
 \prod_{s\in\lambda\setminus\{(1,1)\}}
 (1-q^{a'(s)}t^{l'(s)}).
\end{equation}
Apart from the customary exceptional convention on constants, the
Theta operator attached to a homogeneous symmetric function $f$ is
\begin{equation}\label{eq:standard-Theta-definition}
 \Theta_f=\mathbf\Pi\,M_{f^{\mathrm{pl}}}\,\mathbf\Pi^{-1}.
\end{equation}
This is the definition introduced in
\cite{DAdderioIraciVandenWyngaerd} and used in
\cite{DAdderioRomero}.  If
\[
 \Delta_v=\sum_{r\geq0}(-v)^r\Delta_{e_r},
\]
then the latter paper packages the elementary Theta operators into
\begin{equation}\label{eq:Theta-generating}
 \widetilde\Theta(z,v)
 =\Delta_v\Pcal_{-z/M}\Delta_v^{-1},
 \qquad
 \Pcal_ZF[X]=\sigma_1[ZX]F[X].
\end{equation}
Indeed, the coefficient of $z^r$, followed by the regular limit
$v\to1$, is $(-1)^r\Theta_{e_r}$.

Ben Dali and D'Adderio introduce, first in the modified realization,
\begin{equation}\label{eq:BDD-Gamma-modified}
 \Gamma(u,v)=
 \Delta_{1/v}\Pcal_{uv/(1-q)}\Delta_{1/v}^{-1}.
\end{equation}
It is not merely analogous to \eqref{eq:Theta-generating}: they prove
the factorization
\begin{equation}\label{eq:Gamma-Theta-factorization}
  {
 \Gamma(u,v)=
 \widetilde\Theta(uv,1/v)^{-1}
 \widetilde\Theta(tuv,1/v). }
\end{equation}
Their integral-form operator $\boldsymbol\Gamma$ is obtained by the
standard change of alphabet between modified and integral Macdonald
polynomials; in that realization its middle multiplication factor is
$\Pcal_{uv(1-t)/(1-q)}$.

The main point for us is their explicit isomorphism from ordinary to
shifted symmetric functions.  If $\boldsymbol\nabla$ denotes the
integral-form nabla operator
\begin{equation}\label{eq:bold-nabla}
 \boldsymbol\nabla P_\mu
 =(-1)^{|\mu|}q^{n(\mu')}t^{-n(\mu)}P_\mu,
\end{equation}
then their formula reads
\begin{equation}\label{eq:BDD-shifted-pairing}
 f^*(\lambda)=
 \left\langle
  \Pcal_{1/(1-q)}\boldsymbol\nabla f,
  t^{-n(\lambda)}J_\lambda
 \right\rangle_{q,t}.
\end{equation}
This immediately identifies their construction with our hat
transform.

\begin{proposition}\label{prop:BDD-hat-comparison}
For every $f\in\Sym$, the exponentially normalized stable
representative of its shifted function is
\begin{equation}\label{eq:BDD-hat-comparison}
  {
 \mathbf{\widehat f}
 =\Dop\,\Pcal_{1/(1-q)}\boldsymbol\nabla f. }
\end{equation}
Consequently, the hat transform is the spectral realization, for the
product $\times_{q,t}$, of the Ben Dali--D'Adderio isomorphism
$f\mapsto f^*$.
\end{proposition}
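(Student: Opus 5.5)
The plan is to derive \eqref{eq:BDD-hat-comparison} directly from the Ben Dali--D'Adderio pairing \eqref{eq:BDD-shifted-pairing}, independently of the binomial computation behind Theorem~\ref{thm:shifted-representative}. By linearity it suffices to treat $f=P_\mu$. The comparison then becomes an identification of coefficients in the $P_\lambda$ basis, using the spectral form \eqref{eq:hat-exponential-spectral} of the hat transform.

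First, write the candidate series $G:=\Pcal_{1/(1-q)}\boldsymbol\nabla f$ in the $P_\lambda$ basis. By orthogonality, $\langle P_\lambda,J_\nu\rangle_{q,t}=\delta_{\lambda\nu}\,c'_\lambda$. Hence the coefficient of $P_\lambda$ in any $G$ is $\langle G,J_\lambda\rangle_{q,t}/c'_\lambda$. Multiplying \eqref{eq:BDD-shifted-pairing} by $t^{n(\lambda)}$ then gives
\[
 G=\sum_\lambda \frac{t^{n(\lambda)}f^*(\lambda)}{c'_\lambda(q,t)}P_\lambda .
\]
Since $\Dop$ acts diagonally by $t^{-n(\lambda)}$, applying it cancels the factor $t^{n(\lambda)}$. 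This yields $\Dop G=\sum_\lambda f^*(\lambda)P_\lambda/c'_\lambda$, which is exactly \eqref{eq:hat-exponential-spectral} extended linearly in $f$, that is, $\mathbf{\widehat f}$.

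The main point to check is that the normalizations agree. The $f^*$ in \eqref{eq:BDD-shifted-pairing} must be Okounkov's $P_\mu^*$ extended linearly, as in the definition of the hat transform, and not a version rescaled by $\mathsf H_{q,t}$. I would verify this on lowest components. Taking $\lambda=\mu$ and $f=P_\mu$, the pairing gives $\langle \boldsymbol\nabla P_\mu,t^{-n(\mu)}J_\mu\rangle=(-1)^{|\mu|}q^{n(\mu')}t^{-2n(\mu)}c'_\mu$. This matches the value of $P_\mu^*(\mu)/c'_\mu$ used before Theorem~\ref{thm:shifted-representative}. It is also consistent with the identification $\widetilde\theta^{(q,t)}_\mu=p_\mu^*$ checked in degree two in the examples section. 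As an alternative safeguard, the result coincides with Theorem~\ref{thm:shifted-representative}, so either route proves \eqref{eq:BDD-hat-comparison}.

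For the final assertion, Proposition~\ref{prop:hat-characterization} says that each homogeneous component $n!\,[\mathbf{\widehat f}]_n$ acts on $Q_\lambda$ by the scalar $f^*(\lambda)$. Hence $f\mapsto\widehat f$ realizes the isomorphism $f\mapsto f^*$ as $\times_{q,t}$-multiplication operators. The uniqueness statement in that proposition shows that no other completed series does so.
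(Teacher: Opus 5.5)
Your argument is correct and is essentially the paper's proof. You expand $\Pcal_{1/(1-q)}\boldsymbol\nabla f$ in the $P_\lambda$ basis, use $\langle P_\lambda,J_\lambda\rangle_{q,t}=c'_\lambda$ with the pairing \eqref{eq:BDD-shifted-pairing} to read off the coefficients $t^{n(\lambda)}f^*(\lambda)/c'_\lambda$, and let $\Dop$ cancel the powers of $t$ to recover \eqref{eq:hat-exponential-spectral}; your lowest-degree normalization check and your appeal to Proposition~\ref{prop:hat-characterization} for the final assertion are sound additions that the paper leaves implicit.
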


\begin{proof}
Write
\[
 \Pcal_{1/(1-q)}\boldsymbol\nabla f
 =\sum_\lambda a_\lambda P_\lambda.
\]
Since $\langle P_\lambda,J_\lambda\rangle_{q,t}=c'_\lambda$,
formula \eqref{eq:BDD-shifted-pairing} gives
\[
 f^*(\lambda)=t^{-n(\lambda)}a_\lambda c'_\lambda,
 \qquad
 a_\lambda=
 \frac{t^{n(\lambda)}f^*(\lambda)}{c'_\lambda}.
\]
Applying $\Dop P_\lambda=t^{-n(\lambda)}P_\lambda$ yields
\[
 \Dop\Pcal_{1/(1-q)}\boldsymbol\nabla f
 =\sum_\lambda\frac{f^*(\lambda)}{c'_\lambda}P_\lambda,
\]
which is exactly \eqref{eq:hat-exponential-spectral} and its linear
extension.
\end{proof}

This also checks, without any further calculation, the operator
factorization obtained in Theorem~\ref{thm:shifted-representative}.
Indeed Okounkov's diagonal evaluation is
\begin{equation}\label{eq:shifted-diagonal-evaluation-comparison}
 \frac{P_\mu^*(q^\mu;q,t)}{c'_\mu(q,t)}
 =(-1)^{|\mu|}q^{n(\mu')}t^{-2n(\mu)}.
\end{equation}
Therefore
\begin{equation}\label{eq:H-nabla-comparison}
 \Dop^{-1}\mathsf H_{q,t}=\boldsymbol\nabla,
\end{equation}
and \eqref{eq:hat-operator-announcement} becomes precisely
\eqref{eq:BDD-hat-comparison}.

Finally, Ben Dali and D'Adderio define their Macdonald characters by
iterating $\boldsymbol\Gamma$ and pairing with $p_\mu$.  Their
Theorem~4.3 says equivalently that
\begin{equation}\label{eq:BDD-character-shifted-power}
 \widetilde\theta_\mu^{(q,t)}=p_\mu^*.
\end{equation}
In our language this becomes the concrete spectral statement
\begin{equation}\label{eq:hat-character-comparison}
 \widehat p_\mu^{[n]}\times_{q,t}Q_\lambda
 =\widetilde\theta_\mu^{(q,t)}(\lambda)Q_\lambda
 \qquad(\lambda\vdash n).
\end{equation}
Thus the two constructions produce the same shifted functions.  The
roles are complementary: the Theta formalism constructs and studies
the shifted functions through creation operators, whereas the diagonal
coproduct turns their values into multiplication eigenvalues and
selects a unique stable representative.  Our derivation above, from
Okounkov's binomial formula, and our analysis of the
Nazarov--Sklyanin operators, are 
independent of the Theta-operator argument; the comparison is an
a posteriori identification.

\section{Conclusion and perspectives}

The  diagonal Macdonald coproduct therefore leads to a natural
two-parameter continuation of the stable-series formalism.  Its
completed multiplication algebra realizes Okounkov's shifted
Macdonald functions as eigenvalues, the Nazarov--Sklyanin operators
correspond to shifted columns at inverted parameters, and the whole
construction agrees with the Macdonald characters of Ben Dali and
D'Adderio.  The essential difference from the classical and Jack
theories is that the stable transform is no longer multiplication by
a universal rational series: it involves Cauchy multiplication
together with Macdonald diagonal operators.

Several questions are left open.  First, the Delta-operator expression
\eqref{eq:hat-p2-Delta-operator} suggests that normally ordered
expansions in the differential blocks $(I\mid J)$ should be obtained
most naturally through the shuffle-algebra and vertex-operator
formalisms of \cite{NegutOperators,BenDaliBonzomDolega}.  Such
expansions are expected to have unbounded differential order; their
Jack limits should recover the finite cut-and-join operators.  Second,
the structure constants of Macdonald characters introduced in
\cite{BenDaliDAdderio} are, in the present realization, precisely the
stable structure constants of the product $\times_{q,t}$.  This may
provide a useful operator interpretation of their positivity
conjectures and of their relation with the super-nabla operator.
Finally, Cuenca's interpolation Macdonald operators at infinity
\cite{CuencaInterpolation} involve Hall--Littlewood Cauchy identities
closely related to those used here.  It would be interesting to find
an explicit intertwining relation, presumably governed by Okounkov's
binomial transform, between his operators on interpolation Macdonald
functions and the multiplication operators constructed in this
paper.

\section*{Acknowledgments}
The author used ChatGPT (OpenAI) as an interactive aid in organizing
the manuscript, editing the English text, and checking
computer-algebra calculations.  All mathematical statements and
conclusions remain the responsibility of the author.

\end{document}